\newcommand{\cc}{\mathfrak{c}}
\newcommand{\C}{\mathbb{C}}
\newcommand{\F}{\mathbb{F}}
\newcommand{\isoto}{\xrightarrow{\sim}}
\newcommand{\m}{\mathfrak{m}}
\newcommand{\OK}{\mathcal{O}}
\newcommand{\p}{\mathfrak{p}}
\newcommand{\Pp}{\mathfrak{P}}
\newcommand{\proj}{\mathbb{P}}
\newcommand{\Q}{\mathbb{Q}}
\newcommand{\R}{\mathbb{R}}
\newcommand{\T}{\mathbb{T}}
\newcommand{\Z}{\mathbb{Z}}
\DeclareMathOperator{\Div}{Div}
\DeclareMathOperator{\End}{End}
\DeclareMathOperator{\Frob}{Frob}
\DeclareMathOperator{\Gal}{Gal}
\DeclareMathOperator{\GL}{GL}
\DeclareMathOperator{\Res}{Res}
\DeclareMathOperator{\SL}{SL}
\newcommand{\Cl}{\mathrm{Cl}}
\newcommand{\Eis}{\mathrm{Eis}}
\newcommand{\sm}[4]{\ensuremath{\big(\begin{smallmatrix}#1 & #2 \\ #3 & #4\end{smallmatrix}\big)}}
\newcommand{\ttmat}[4]{\left( \begin{array}{cc}
#1 & #2 \\
#3 & #4
\end{array}
\right)}
\newcommand\Circle[1]{%
  \def\useanchorwidth{T}%
  \def\stacktype{L}%
  \stackon[0pt]{#1}{\scalebox{2.8}[1.15]{{$\bigcirc$}}}%
}
\definecolor{OliveGreen}{rgb}{0.0, 0.6, 0.0}
\definecolor{DarkBlue}{rgb}{0.0, 0.0, 0.8}
\colorlet{prestoncolor}{red!100}
\colorlet{jackiecolor}{OliveGreen!100}
\newcommand\pout{\bgroup\markoverwith{\textcolor{prestoncolor}{\rule[0.5ex]{2pt}{0.4pt}}}\ULon}
\newcommand\jout{\bgroup\markoverwith{\textcolor{jackiecolor}{\rule[0.5ex]{2pt}{0.4pt}}}\ULon}
\newtheorem{theoremA}{Theorem}
\newtheorem{theorem}{Theorem}[section]
\newtheorem{corollary}[theorem]{Corollary}
\newtheorem{proposition}[theorem]{Proposition}
\newtheorem{lemma}[theorem]{Lemma}
\theoremstyle{definition}
\newenvironment{example}
  {\pushQED{\qed}\examplex}
  {\popQED\endexamplex}
\theoremstyle{remark}
\newenvironment{remark}
  {\pushQED{\qed}\remarkx}
  {\popQED\endremarkx}
\newtheoremstyle{dotless}{}{}{}{}{\itshape}{}{ }{}
\theoremstyle{dotless}
\begin{document}

\title{A modular construction of unramified $p$-extensions of $\Q(N^{1/p})$}

\author{Jaclyn Lang}
\address{Temple University}
\email{jaclyn.lang@temple.edu}

\author{Preston Wake}
\address{Michigan State University}
\email{wakepres@msu.edu}

\subjclass[2010]{11F33, 11F80, 11R29, 11R37}
\keywords{Eisenstein ideal, class group, Galois representation}

\begin{abstract}
We show that for primes $N, p \geq 5$ with $N \equiv -1 \bmod p$, the class number of $\Q(N^{1/p})$ is divisible by $p$.  Our methods are via congruences between Eisenstein series and cusp forms.  In particular, we show that when $N \equiv -1 \bmod p$, there is always a cusp form of weight $2$ and level $\Gamma_0(N^2)$ whose $\ell$-th Fourier coefficient is congruent to $\ell + 1$ modulo a prime above $p$, for all primes $\ell$.  We use the Galois representation of such a cusp form to explicitly construct an unramified degree $p$ extension of $\Q(N^{1/p})$.
\end{abstract}

\maketitle

\section{Introduction}
Throughout this paper, $N$ and $p$ denote prime numbers such that $p \geq 5$.  

\subsection{Main results} 
We give a proof of the following theorem via congruences between Eisenstein series and cuspidal modular forms.
\begin{theoremA}\label{main thm}
If $N \equiv -1 \bmod p$ then $p$ divides the class number of $\Q(N^{1/p})$.
\end{theoremA}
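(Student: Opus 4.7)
The plan is to follow the strategy outlined in the abstract. The proof has two main parts: first, establish a mod-$p$ congruence between a cuspidal eigenform and an Eisenstein series at weight two and level $\Gamma_0(N^2)$ --- that is, find $f \in S_2(\Gamma_0(N^2))$ and a prime $\mathfrak{p} \mid p$ of its coefficient field with $a_\ell(f) \equiv \ell + 1 \pmod{\mathfrak{p}}$ for every prime $\ell$; second, extract from the attached Galois representation an unramified cyclic degree-$p$ extension of $\Q(N^{1/p})$. Observe that consistency of the congruence at $\ell = N$ requires $a_N(f) = 0 \equiv N + 1 \pmod{\mathfrak{p}}$, which is precisely the hypothesis $N \equiv -1 \pmod p$; this is a first hint that the hypothesis is the correct one for the strategy.

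For the first step, consider the weight-two Eisenstein series $E$ on $\Gamma_0(N^2)$ with $a_\ell(E) = \ell + 1$ for $\ell \neq N$ and $a_N(E) = 0$. I would show that the Eisenstein maximal ideal $\mathfrak{m} = (p, \{T_\ell - \ell - 1\}_{\ell \neq N}, U_N)$ is a proper ideal of the cuspidal Hecke algebra $\T \subset \End(S_2(\Gamma_0(N^2)))$ by computing the constant terms of $E$ at the cusps of $X_0(N^2)$ and observing divisibility by $N+1$, hence by $p$; this forces $E$ to coincide mod $p$ with a cusp form. For the second step, attach $\rho_f \colon G_\Q \to \GL_2(\OK_{\mathfrak{p}})$; the congruence forces $\bar\rho_f^{\semis} \cong \mathbf{1} \oplus \chi_p$, where $\chi_p$ is the mod-$p$ cyclotomic character. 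Using Ribet's lemma and irreducibility of $\rho_f$, choose a Galois-stable lattice realizing $\bar\rho_f$ as a non-split extension $0 \to \mathbf{1} \to \bar\rho_f \to \chi_p \to 0$, yielding a non-zero cocycle $c \in H^1(G_\Q, \F_p(-1))$. Local analysis of $\rho_f$ will constrain $c$: the weight-two structure puts $\bar\rho_f|_{G_{\Q_p}}$ into a finite-flat local condition at $p$, and the level-$N^2$ structure together with $a_N(f) = 0$ controls the ramification at $N$. After restricting $c$ to $G_K$ with $K := \Q(\zeta_p, N^{1/p})$ and trivializing $\F_p(-1)|_{G_K} \cong \F_p$, one obtains a homomorphism $G_K \to \F_p$ whose kernel cuts out an unramified $\Z/p$-extension of $K$; tracking its $\Gal(K/\Q)$-equivariance and using that $[K:\Q(N^{1/p})] = p-1$ is coprime to $p$, this descends to an unramified $\Z/p$-extension of $\Q(N^{1/p})$.

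The main obstacle is the local analysis at $N$: one must show that the ramification of $\bar\rho_f$ at $N$ is precisely that of the Kummer extension $K/\Q(\zeta_p)$, so that $c$ becomes unramified upon restriction to $G_K$, and that the resulting class has the correct $\Gal(K/\Q(N^{1/p}))$-equivariance to descend to a nontrivial element of $\Cl(\Q(N^{1/p}))[p]$. This is where the choice of level $N^2$ (rather than $N$) is essential, since at level $N$ the relevant Eisenstein ideal is trivial under the hypothesis $N \equiv -1 \pmod p$. The Eisenstein congruence in step one is also non-trivial, requiring bookkeeping for the multiple cusps of $X_0(N^2)$ and the old/new decomposition at level $N$, but the crux of the argument lies in passing from the congruence to the class-group statement via the Galois representation.
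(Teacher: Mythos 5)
Your first step (producing the Eisenstein congruence at level $\Gamma_0(N^2)$ by computing residues at the cusps) matches the paper's approach in \cref{our eisenstein series} in spirit and strategy. Your second step, however, diverges from the paper and contains a gap that is not just a matter of unfinished bookkeeping.

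You propose to use Ribet's lemma to get a nonsplit extension $0 \to \mathbf{1} \to \bar\rho_f \to \chi_p \to 0$, extract the cocycle $c \in H^1(G_\Q, \F_p(-1))$, restrict it to $G_K$ with $K = \Q(\zeta_p, N^{1/p})$, and then ``descend'' to $F := \Q(N^{1/p})$ using $\gcd([K:F], p)=1$. The fundamental obstruction is an eigenspace mismatch. Because $c$ has values in $\F_p(-1)$, the unramified homomorphism it induces on $G_K$ lies (after trivializing the Tate twist) in the $\omega^{-1}$-isotypic component of $\Hom(\Cl(K), \F_p)$ under $\Gal(K/F) \cong (\Z/p\Z)^\times$. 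By contrast, $\Cl(F)[p]$ corresponds to the $\Gal(K/F)$-invariants of $\Cl(K)[p]$, i.e.\ the $\omega^0$-eigenspace. An unramified $\Z/p$-extension of $K$ on which $\Gal(K/F)$ acts by $\omega^{-1}$ does \emph{not} descend to an unramified $\Z/p$-extension of $F$: the corresponding Galois group over $F$ is the nonabelian semidirect product $\F_p \rtimes_{\omega^{-1}} (\Z/p\Z)^\times$, which has no quotient of order $p$. So the ``descent'' step, as stated, gives nothing about $\Cl(F)$. You also leave the unramifiedness of $c$ at $N$ unproven; after base change to $K$ the tame inertia at $N$ has order $N^2-1$, which \emph{is} divisible by $p$, so the cheap prime-to-$p$ argument is unavailable, and a genuine local analysis of $\rho_f|_{G_{\Q_N}}$ (or another input) would be required.

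The paper sidesteps both issues by taking a rather different route. It chooses the lattice in the \emph{other} direction, so that $\rho_T \bmod \varpi^s = \bigl(\begin{smallmatrix}\epsilon & \kappa_N\\ 0 & 1\end{smallmatrix}\bigr)$ where $\kappa_N$ is the Kummer cocycle of $N$, and then works one level deeper, modulo $\varpi^{s+1}$ where $s$ is the depth of the Eisenstein congruence. Restricting to $G_F$ kills $\kappa_N$ modulo $\varpi$, so on $G_F$ the off-diagonal entries of $\rho_T$ contribute only at order $\geq \varpi^{s+1}$; hence the \emph{diagonal} entry $a|_{G_F} \bmod \varpi^{s+1}$ is already a genuine multiplicative character of $G_F$ (not of $K$). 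This gives the character $\chi$ directly, with no descent needed. Unramifiedness of $\chi$ at $N$ is then essentially free: the abelian tame quotient of inertia at $N$ inside $G_F$ has order $N-1$, which is prime to $p$ precisely under the hypothesis $N \equiv -1 \bmod p$, so an order-$p$ character of $G_F$ must be unramified at $N$, with no analysis of $\rho_f|_{G_{\Q_N}}$ required. Unramifiedness at $p$ follows from ordinarity of $f$. Finally, nontriviality of $\chi$ is what uses the $\F_p(-1)$-valued cocycle $\bar c$ (the $c$-entry of the lattice divided by $\varpi^s$), via a Galois-action argument that would yield $\omega = \omega^{-1}$ if $\chi$ were trivial. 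Your proposal, working only with the $\F_p(-1)$-cocycle modulo $\varpi$, misses the essential ``going one step deeper'' mechanism; that is exactly where the Massey-product phenomenon that Calegari's cup-product argument exploits is realized on the modular side, and without it the class-group statement does not follow.
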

This theorem was conjectured by Kobayashi \cite[Conjecture 1]{Kobayashi16}, who proved it in the case $p=5$.  
A proof of \cref{main thm} using Galois cohomology was sketched by Calegari on his blog \cite{Frank}.  Calegari asks whether there is a direct proof of \cref{main thm} and whether there is ``an easy way to construct the relevant unramified extension of degree $p$".  The purpose of this paper is to do exactly that. 

We give an explicit construction of the corresponding unramified extension of degree $p$ of $\Q(N^{1/p})$ using the Galois representation of a modular form.  
Explicitly, we prove the following.

\begin{theoremA}\label{main thm 2}
Assume that $N \equiv -1 \bmod p$. 
\begin{enumerate}[label=(\alph*)]
\item\label{congruence} There is an newform $f$ of weight $2$ and level $\Gamma_0(N^2)$ and a prime ideal $\p$ over $p$ in the ring of integers $\mathcal{O}_f$ of the Hecke field of $f$ such that for all primes $\ell$,
\begin{equation}
\label{eq:eis congruence}
a_\ell(f) \equiv 1+\ell \bmod \p.
\end{equation}
\item\label{character} Moreover, if $s$ is the largest integer such that $a_\ell(f) \equiv 1 + \ell \bmod \p^s$ for all primes $\ell \neq N$ and $t_f: \Gal(\overline{\Q}/\Q) \to \mathcal{O}_{f, \p}$ denotes the trace of the Galois representation of $f$, then 
\[
t_f|G_{\Q(N^{1/p})}  \equiv \chi \epsilon + \chi^{-1} \bmod{\p^{s+1}},
\]
where $\epsilon$ is the $p$-adic cyclotomic character and $\chi : \Gal(\overline{\Q}/\Q(N^{1/p})) \to (\mathcal{O}_{f, \p} / \p^{s+1})^\times$ is a non-trivial everywhere unramified character with $\chi \equiv 1 \bmod{\p^s}$. 
\end{enumerate}
\end{theoremA}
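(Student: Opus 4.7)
I would produce the eigenform $f$ via an Eisenstein congruence at level $\Gamma_0(N^2)$. The space of Eisenstein series of weight $2$ and level $\Gamma_0(N^2)$ whose Hecke eigenvalues at primes $\ell \neq N$ agree with $1 + \ell$ is two-dimensional, spanned by the two natural degeneracy lifts of the level-$\Gamma_0(N)$ Eisenstein series $E_2(z) - N E_2(N z)$. The plan is to compute the constant terms of a general linear combination $\alpha(E_2(z) - N E_2(N z)) + \beta(E_2(N z) - N E_2(N^2 z))$ at each of the $N + 1$ cusps of $\Gamma_0(N^2)$, and to observe that the hypothesis $N \equiv -1 \pmod p$ gives $p \mid N^2 - 1$ and in particular forces the constant terms at the ``middle'' cusps of denominator $N$ to admit a common $p$-divisible representative for a suitable $(\alpha,\beta)$. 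By the $q$-expansion principle the resulting Eisenstein combination is congruent mod $\p$ to a cusp form, which after diagonalization by Hecke operators produces the desired eigenform $f$. To see that $f$ is new at level $N^2$: a congruence with an oldform of level $\Gamma_0(N)$ would yield a mod-$p$ Eisenstein congruence already at level $\Gamma_0(N)$, which by Mazur's theorem forces $p \mid N-1$; this contradicts $p \mid N+1$ since $p \geq 5$.

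\textbf{Proof plan for Part (b).} Let $\rho_f : \Gal(\overline{\Q}/\Q) \to \GL_2(\OK_{f,\p})$ be the Galois representation of $f$; it is absolutely irreducible, has determinant $\epsilon$, and is ramified only at $N$ and $p$. By part (a), the semisimplification of its mod-$\p$ reduction is $\mathbf{1} \oplus \bar\epsilon$. Via a Ribet-style choice of $G_\Q$-stable lattice, one can write $\rho_f$ modulo $\p^s$ in upper-triangular form with diagonal characters $\epsilon$ and $\mathbf{1}$; the failure of this reducibility to persist modulo $\p^{s+1}$ is controlled by a cohomology class $c$ with coefficients in $(\p^s/\p^{s+1})(\bar\epsilon^{\pm 1})$. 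The core of the proof is the local analysis of $c$: at $p$, the ordinarity (or crystalline structure) of $\rho_f|_{G_{\Q_p}}$ forces $c$ to be unramified at $p$; at $N$, the fact that $f$ has exact conductor $N^2$ constrains $c$ to lie in the tame Kummer line $H^1(\Q_N, \mu_p)$ generated by the class of $N$. By Kummer theory $H^1(\Q, \mu_p) = \Q^\times/(\Q^\times)^p$, the globalized class $c$ then coincides, up to a unit, with the Kummer class of $N$. Since $N$ is a $p$-th power in $\Q(N^{1/p})$, the restriction $c|_{G_{\Q(N^{1/p})}}$ is trivial. Rewriting $\rho_f$ in a lattice adapted to this trivialization makes $\rho_f|_{G_{\Q(N^{1/p})}}$ block-diagonal modulo $\p^{s+1}$, producing the decomposition $t_f|_{G_{\Q(N^{1/p})}} \equiv \chi\epsilon + \chi^{-1} \pmod{\p^{s+1}}$ with $\chi \equiv 1 \pmod{\p^s}$. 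The everywhere-unramifiedness of $\chi$ on $G_{\Q(N^{1/p})}$ then drops out of the same local analysis: unramifiedness outside $Np$ from the global ramification of $\rho_f$, unramifiedness at places above $p$ from the local analysis at $p$, and unramifiedness at the place above $N$ precisely because the Kummer class of $N$ has been killed by the restriction.

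\textbf{Main obstacle.} The subtle step is the local analysis at $N$ in Part (b): identifying the obstruction class $c$ with the Kummer class of $N$ uses both the exact conductor $N^2$ (which pins down the local type of $\rho_f$ at $N$ and hence the tame part of $c$) and the non-existence of oldform congruences established in Part (a). Once $c$ is identified, the decomposition of $t_f|_{G_{\Q(N^{1/p})}}$ and the unramifiedness of $\chi$ are formal consequences of Kummer theory and careful ramification bookkeeping.
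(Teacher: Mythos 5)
Your Part (a) strategy — compute constant terms of the two degeneracy lifts at the $N{+}1$ cusps of $X_0(N^2)$, observe that $N \equiv -1 \bmod p$ gives $p \mid N^2{-}1$, cancel the residues to produce a mod-$p$ cusp form, and rule out oldforms via Mazur's theorem — is essentially the paper's argument. The paper packages the cusp computation via the residue exact sequence $0 \to S_2 \to M_2 \to \Div^0(C) \to 0$, computes $\Res(E) = \tfrac{N^2-1}{24}\cc$ directly, and shows the old-Eisenstein localization vanishes; these are the same calculations you describe, organized a bit more systematically.

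For Part (b), your outline (Ribet lattice, Kummer class of $N$ in the off-diagonal, restrict to $G_F$) matches the paper's, but there are three gaps. First and most importantly, you never establish that $\chi$ is \emph{non-trivial}. This does not follow for free: $s$ is defined by reducibility over $G_\Q$, not over $G_F$, so it is not automatic that the induced character on $G_F$ modulo $\p^{s+1}$ is non-trivial. The paper devotes a full proposition to this, using the lower-left entry $\bar{c} \colon G_\Q \to \F(-1)$ and a normality argument in $r(G_\Q)$ to derive a contradiction $\omega = \omega^{-1}$ from $p>3$; this step has to appear somewhere. Second, your cohomology class bookkeeping conflates the $b$-entry (coefficients in $\F(1) \cong \mu_p$, which is the Kummer class of $N$ and dies on $G_F$) with the $c$-entry (coefficients in $\F(-1)$, which is the class that detects $\chi$). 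The class you call ``$c$'' cannot simultaneously live in $(\p^s/\p^{s+1})(\bar\epsilon^{-1})$ and in $H^1(\Q, \mu_p)$; the reducibility of $t_f|_{G_F}$ mod $\p^{s+1}$ requires tracking \emph{both} entries: $b|_{G_F} \in \varpi\OK$ (Kummer class dies) \emph{and} $c|_{G_F} \in \varpi^s\OK$ (from the reducibility ideal), so their product lands in $\varpi^{s+1}$. Relatedly, the phrase ``block-diagonal modulo $\p^{s+1}$'' overclaims; what one actually gets is that the trace is a sum of characters mod $\p^{s+1}$. Third, the local analysis at $N$ alone cannot identify the extension class with $\kappa_N$ up to a unit: since $N \equiv -1 \bmod p$, the local restriction map kills $\kappa_p$ (because $p$ is automatically a $p$-th power mod $N$), so local conditions at $N$ cannot distinguish $\kappa_N$ from $\kappa_N + \kappa_p$. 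The paper instead uses the finite-flat (ordinary) condition at $p$ to rule out any $\kappa_p$ component; you invoke ordinarity at $p$ only for the unramifiedness of $\chi$, and you would need it here as well.
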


The values $1 + \ell$ on the right hand side of \eqref{eq:eis congruence} {are} the Hecke eigenvalue{s} of an Eisenstein series, so we say that a form $f$ satisfying \eqref{eq:eis congruence} for all primes $\ell \neq N$ is \emph{congruent to an Eisenstein series}, and say \eqref{eq:eis congruence} is an \emph{Eisenstein congruence}, modulo $p$.

\begin{remark}

For level $\Gamma_0(M)$ with $M$ prime, Mazur \cite{Mazur77} gave a necessary and sufficient condition for an Eisenstein congruence to exist. For squarefree $M$,  partial necessary and sufficient conditions have been found by Ribet \cite{Ribet10} (see also \cite{Yoo17}) using geometry of modular Jacobians, and by the second author and Wang-Erickson \cite{WWE21} using Galois deformation theory.  For some nonsquarefree $M$, sufficient conditions have been proven by Martin \cite{Martin17} using Jaquet--Langlands theory and necessary conditions by Yoo \cite{Yoo19} using geometry of modular Jacobians. However, both those works do not consider the case where $M$ is a square of a prime. The case where $M$ is a square of a prime has been considered by Gross--Lubin \cite{GL86} and Calegari \cite{Calegari06}, but then only when $p \mid M$.
\end{remark}

Note that \cref{main thm 2} implies \cref{main thm} by class field theory,  because $\chi$ cuts out a degree-$p$ unramified extension of $\Q(N^{1/p})$.  \cref{main thm 2} may be thought of as an explicit version of \cref{main thm} because the Fourier coefficients of the newform $f$ can often be efficiently computed. Extra information about the class field of $\Q(N^{1/p})$ can be read off from this data, as we illustrate in \cref{example 5 19}.

\subsection{The $N \equiv 1 \bmod p$ case}\label{comparison}
To understand the context for Theorems \ref{main thm} and \ref{main thm 2}, 
we recall what is known in the case when $N \equiv 1 \bmod p$ --- a congruence condition we impose throughout \cref{comparison}.  In this case, it is easy to see that $\Cl(\Q(N^{1/p}))[p]$ --- the $p$-torsion in the class group of $\Q(N^{1/p})$ --- is non-trivial: there is a degree-$p$ subextension $\Q(\zeta_N^{(p)})$ of $\Q(\zeta_N)/\Q$ and $\Q(\zeta_N^{(p)},N^{1/p}) / \Q(N^{1/p})$ is unramified. Letting
\[
r_\Cl = \dim_{\F_p} \Cl(\Q(N^{1/p}))[p]
\]
we can see $r_\Cl \ge 1$, but the exact value of $r_\Cl$ is interesting. In particular, it is interesting to ask when $r_\Cl>1$, or, in other words, when there is an unramified $p$-extension of $\Q(N^{1/p})$ that is not explained by genus theory.

On the modular forms side,  Mazur \cite[Proposition II.9.7]{Mazur77} proved that there is a cuspform $f$ of weight $2$ and level $\Gamma_0(N)$ that is congruent to the Eisenstein series modulo $p$ if and only if $N \equiv 1 \pmod{p}$.  Letting $S_2(\Gamma_0(N);\Z_p)_\mathrm{Eis}$ denote the completion of the space of cusp forms at the Eisenstein maximal ideal, and letting
\[
r_\Eis = \mathrm{rank}_{\Z_p} S_2(\Gamma_0(N);\Z_p)_\mathrm{Eis},
\]
Mazur's result implies $r_\Eis \ge 1$, but he also asked what the significance of $r_\Eis$ is in general \cite[Section II.19, page 140]{Mazur77}.

The first result about $r_\Eis$ was obtained by Mazur \cite[Proposition II.19.2, pg.~140]{Mazur77}, who showed that $r_\Eis=1$ if and only if the Weil pairing on $J_0(N)$ has a certain property. Merel used modular symbols and Mazur's result to prove a remarkable numerical criterion for $r_\Eis$ to equal $1$ \cite[Th\'eor\`eme 2]{Merel96}. More recently, Lecouturier has greatly generalized Merel's techniques to relate the value of $r_\Eis$ to ``higher Merel invariants".

Calegari and Emerton \cite{CE05} were the first to find a relationship between $r_\Eis$ and $r_\Cl$. They proved
\begin{equation}
\label{rank and class group}
r_\Eis=1 \Longrightarrow r_\Cl=1
\end{equation}
using Galois deformation theory and explicit class field theory. Later, Lecouturier \cite{Lecouturier18} used Merel's result to give a new proof of \eqref{rank and class group} by purely algebraic-number-theoretic methods.

The second author and Wang-Erickson refined Calegari and Emerton's method to precisely determine the value of $r_\Eis$ in terms of vanishing of a certain cup product (or, more generally, Massey product) in Galois cohomology \cite{WWE20}. In particular, they show that $r_\Eis=1$ if and only of a certain cup product vanishes \cite[Theorem 1.2.1]{WWE20}. They also show that the vanishing of this cup product implies $r_\Cl>1$, hence giving a new proof of \eqref{rank and class group}.  Schaefer and Stubley \cite{SS19} built upon this cup product technique and the results of \cite{Lecouturier18} to prove more precise bounds on $r_\Cl$.  

\subsection{Comparing $N \equiv 1 \bmod{p}$ and $N \equiv -1 \bmod{p}$}\label{contrasts} When $N \equiv -1 \pmod{p}$, in contrast to the previous section, the genus field of $\Q(N^{1/p})$ is trivial. Hence we see \cref{main thm} as analogous to ``$r_\Cl>1$" of the previous section.

When $N \equiv -1 \pmod{p}$, then Mazur's results imply that $S_2(\Gamma_0(N);\Z_p)_\mathrm{Eis}$ is trivial. Instead, we study $S_2(\Gamma_0(N^2);\Z_p[\zeta_N])_\mathrm{Eis}$ and \cref{main thm 2} implies that this is non-trivial. We think of this as being analogous to ``$r_\Eis>1$" of the previous section.

The surprising thing is that, although ``$r_\Cl>1$" and ``$r_\Eis>1$" do not always hold for $N \equiv 1 \bmod{p}$, their analogs for $N \equiv -1 \pmod{p}$ \emph{do} always hold.  Just as ``$r_\Cl>1$" and ``$r_\Eis>1$" are related to the vanishing of a cup product, their analogs for $N \equiv -1 \bmod{p}$ are also related to the vanishing of a cup product. The difference is that, when $N \equiv -1 \bmod{p}$, the relevant cup product \emph{always vanishes} because the codomain $H^2$ group vanishes.  Indeed, this is the observation that Calegari made after attending a lecture by the second author about the work of \cite{WWE20} explaining the relation between cup products and the class group of $\Q(N^{1/p})$ that allowed him to give a Galois cohomology proof of \cref{main thm} using the methods of \cite{WWE20}.

\subsection{Eisenstein congruences in the case $N \equiv -1 \bmod{p}$}
The purpose of this paper is to show that, just as in \cite{WWE20}, the abstract Galois cochain used in \cite{Frank} actually appears in the Galois representation associated to a newform.  The newform we need has to be congruent to an Eisenstein series, but Mazur's theorem implies that there is no such newform of level $\Gamma_0(N)$ when $N \equiv -1 \bmod{p}$.  Our motivation came from considering the obstruction, from the point of view of Galois deformation theory,  to producing the relevant Galois representation. The observation we made is that there is no obstruction to producing such a representation that is unramified outside $N$ and $p$; the only obstruction comes from making it be Steinberg at $N$.  Consequently,  if we relax the local condition at $N$ by considering forms of level $\Gamma_0(N^2)$,  we expect to find a newform that is congruent to the Eisenstein series.  Although these deformation-theoretic considerations led us to conjecture that \cref{main thm 2} should be true, the proof does not use deformation theory; it is a direct computation using Eisenstein series.

\begin{remark}
In fact, Kobayashi conjectures that for any positive integer $m$ that is divisible by a prime $\ell \equiv -1 \bmod p$, the class number of $\Q(m^{1/p})$ should be divisible by $p$ \cite[Conjuecture 1]{Kobayashi16}.  Calegari explains how to use cup products to prove Kobayashi's conjecture in full generality \cite{Frank}.  We believe a modular approach is also possible by combining the methods of the current paper with those of the second author and Wang-Erickson in \cite{WWE21}, but we have elected not to do so in this paper for simplicity.
\end{remark}

\subsection{Layout}
In \cref{our eisenstein series} we establish the Eisenstein congruence promised in \cref{main thm 2}\ref{congruence}.  There are no Galois representations in this section; the main calculation is to compute the constant terms at the cusps of an Eisenstein series.  We then derive the consequences of this congruence for Galois representations and the class group of $\Q(N^{1/p})$ in \cref{class group}, thus proving \cref{main thm 2}\ref{character} and hence \cref{main thm}.  We end by showing, in \cref{example 5 19}, how explicit information about the Fourier coefficients of the modular form found in \cref{main thm 2} gives explicit information about the primes that split in the class field of $\Q(N^{1/p})$, thus demonstrating the advantages of a modular proof of \cref{main thm}.

\bigskip

\noindent \textit{Notation.}  For a positive integer $n$, let $\zeta_n$ denote a primitive $n$-th root of unity.  When $S$ is a subset of $M_2(\R)$, we write $S^+$ for the subset of $S$ with positive determinant.  For a field $F$ of characteristic 0, fix an algebraic closure $\overline{F}$.  Write $G_F \coloneqq \Gal(\overline{F}/F)$, which we may implicitly view as a subgroup of $G_\Q$ when $F$ is a number field.  Let $G_{\Q,Np}$ denote the Galois group of the maximal extension of $\Q$ that is unramified outside $Np$.  We fix an  embedding $\overline{\Q} \hookrightarrow \overline{\Q}_p$ and let $I_p$ denote the corresponding inertia subgroup of $G_\Q$.  Let $\varepsilon : G_\Q \to \Z_p^\times$ be the $p$-adic cyclotomic character and $\omega$ its mod $p$ reduction.  We write $\overline{\Z}_p$ for the elements in $\overline{\Q}_p$ that are integral over $\Z_p$.  If $X$ is a scheme, then $\OK_X$ denotes its structure sheaf.  

\section{Eisenstein series and residues}\label{our eisenstein series}
\subsection{The modular curve $X_0(N^2)$ and its cusps}
Recall that $N$ and $p$ always denote distinct primes, and $p \geq 5$.  Define
\[
\Gamma \coloneqq \Gamma_0(N^2) \coloneqq \bigl\{\bigl(\begin{smallmatrix}
a & b\\
N^2c & d
\end{smallmatrix}\bigr) \in \SL_2(\Z) \colon c \in \Z\bigr\},
\] 
which acts on the upper half complex plane $\mathfrak{h}$ by M\"obius transformations.  The open Riemann surface $\Gamma \backslash \mathfrak{h}$ can be compactified to $\Gamma \backslash \mathfrak{h}^*$ by adding the cusps $\Gamma \backslash \proj^1(\Q)$.  The complex curves $\Gamma \backslash \mathfrak{h} \subset \Gamma \backslash \mathfrak{h}^*$ descend to $\Q$ and admit a smooth model over $\Z[1/N]$.  Let $Y \coloneqq Y_0(N^2) \subset X \coloneqq X_0(N^2)$ denote the base change of these smooth models to $\Z_p[\zeta_N]$.  

Let $C \coloneqq X \setminus Y$ denote the scheme of cusps on $X$.  A standard calculation shows that there are $N+1$ geometric points of $C$ \cite[\S 3.8]{DS05}, all defined over $\Z_p[\zeta_N]$ \cite[\S VI.5]{DR73}, represented by the following elements in $\proj^1(\Q)=\Q \cup \{\infty\}$:
\begin{equation}\label{cusp identifications}
\infty, 0,1/N, 2/N, \dots, (N-1)/N.
\end{equation}
We sometimes conflate $C$ with its set of geometric points.  It will be convenient to consider $(\Z/N\Z)^\times$ as the indexing set for the set $C \setminus \{\infty, 0\}$.  For $x \in (\Z/N\Z)^\times$, we define $[x] \in C$ to be the class of $\tilde{x}/N \in \proj^1(\Q)$, where $1 \leq \tilde{x} \leq N - 1$ such that $\tilde{x} \equiv x \bmod N$.  Similarly, write $[0]$ for the cusp $0$ to avoid confusion.  We write $\Div(C;\Z_p[\zeta_N])$ for the divisor group supported on the cusps and $\Div^0(C;\Z_p[\zeta_N])$ for the degree-0 part.

\subsection{Modular forms and the residue sequence}
Let $\Omega = \Omega_X^1$ be the invertible sheaf of 1-forms 
on $X$ over $\Z_p[\zeta_N]$.  Viewing $C$ as a divisor on $X$ we have the sheaf $\Omega(C) = \Omega \otimes \OK_X(C)$ of 1-forms on $X$ where we allow simple poles at $C$.  Define the space of modular forms (respectively, cusp forms) of weight 2 and level $\Gamma$ with coefficients in $\Z_p[\zeta_N]$ by $M_2(\Gamma; \Z_p[\zeta_N]) \coloneqq H^0(X, \Omega(C))$ (respectively, $S_2(\Gamma; \Z_p[\zeta_N]) \coloneqq H^0(X, \Omega)$).  Note that this definition is compatible with the usual definition. That is, fixing an isomorphism $\overline{\Q}_p \cong \C$, we can identify $M_2(\Gamma; \Z_p[\zeta_N])$ with the subspace $M_2(\Gamma; \C)$ whose $q$-expansions have $\Z_p[\zeta_N]$-coefficients since $p \nmid N^2$ \cite[Lemma II.4.5]{Mazur77}.  

\begin{proposition}\label{main exact sequence}
There is an exact sequence
\begin{equation}\label{final exact sequence}
0 \to S_2(\Gamma; \Z_p[\zeta_N]) \to M_2(\Gamma; \Z_p[\zeta_N]) \xrightarrow{\Res} \Div(C; \Z_p[\zeta_N])\xrightarrow{\Sigma} \Z_p[\zeta_N] \to 0,
\end{equation}
where $\Res$ sends a modular form to the formal sum of its residues at the cusps, and $\Sigma$ is the sum map.
\end{proposition}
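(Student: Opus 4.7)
The plan is to realize \eqref{final exact sequence} as (part of) the long exact cohomology sequence of the short exact sequence of $\OK_X$-modules
\[
0 \to \Omega \to \Omega(C) \to \Omega(C)/\Omega \to 0
\]
on $X$, where throughout I write $R := \Z_p[\zeta_N]$. Since $C$ is a reduced effective Cartier divisor on $X$ all of whose geometric points are $R$-rational (by the discussion of cusps around \eqref{cusp identifications}), the quotient $\Omega(C)/\Omega$ is a skyscraper sheaf supported on $C$ with stalk $R$ at each cusp; choosing a local uniformizer $t$ at a cusp $c$ and sending $(f/t)\,dt \bmod \Omega$ to $f(c) \in R$ furnishes a canonical (uniformizer-independent) identification whose global sections are $\Div(C; R)$, and the resulting boundary map $M_2(\Gamma; R) = H^0(X, \Omega(C)) \to \Div(C; R)$ is precisely $\Res$. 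Being a skyscraper, $\Omega(C)/\Omega$ has vanishing $H^1$.

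With this in hand, the long exact sequence reads
\[
0 \to S_2(\Gamma; R) \to M_2(\Gamma; R) \xrightarrow{\Res} \Div(C; R) \xrightarrow{\delta} H^1(X, \Omega) \to H^1(X, \Omega(C)) \to 0,
\]
and two cohomological computations complete the identification with \eqref{final exact sequence}. First, Grothendieck--Serre duality for the smooth proper morphism $X \to \Spec R$, whose geometric fibers are connected (so that $H^0(X, \OK_X) = R$), supplies a canonical trace isomorphism $\tr : H^1(X, \Omega) \isoto R$. Second, on each geometric fiber $X_s$ of genus $g$, the line bundle $\Omega(C)_s$ has degree $(2g - 2) + (N + 1) > 2g - 2$, so $H^1(X_s, \Omega(C)_s) = 0$ by Serre duality; cohomology and base change (applicable since $X/R$ is flat and proper and $R$ is a DVR) then upgrades this to $H^1(X, \Omega(C)) = 0$.

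The main obstacle will be to identify $\tr \circ \delta$ with the sum map $\Sigma$. This is the residue theorem in families: under the duality pairing $H^0(X, \OK_X) \otimes H^1(X, \Omega) \to R$, the composition $\tr \circ \delta$ sends a divisor $\sum_c n_c [c]$ to its pairing with the constant function $1$, which on the generic fiber is the classical assertion that the sum of residues of a meromorphic $1$-form vanishes, and which is propagated over $R$ by flatness; alternatively one invokes Hartshorne's relative residue theorem for $X \to \Spec R$ directly. Once $\delta = \Sigma$ is established, exactness at $\Div(C; R)$ together with surjectivity of $\Sigma$ follows from the vanishing $H^1(X, \Omega(C)) = 0$, while exactness at $S_2(\Gamma; R)$ and at $M_2(\Gamma; R)$ is immediate from the definitions of $S_2$ and $\Res$.
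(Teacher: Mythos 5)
Your proposal is correct and follows essentially the same route as the paper: both take the long exact cohomology sequence of $0 \to \Omega \to \Omega(C) \to (\text{skyscraper at } C) \to 0$, identify $H^1(X,\Omega) \cong \Z_p[\zeta_N]$ via duality, and recognize the connecting map as the sum. The only cosmetic differences are that you deduce surjectivity of $\Sigma$ from the vanishing $H^1(X,\Omega(C))=0$ (the paper observes surjectivity of $\Sigma$ directly) and you spell out the residue-theorem identification of $\tr\circ\delta$ where the paper delegates to base change to $W[1/p]$ and a citation of Ohta.
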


\begin{proof}
Set $W = \Z_p[\zeta_N]$.  For $c \in C$, the inclusion $\iota_c \colon c \hookrightarrow X$ comes with a map $\OK_X \to \iota_{c*}(W)$ with kernel $\OK_X(-c)$.  Putting them together gives a short exact sequence of sheaves
\[
0 \to \OK_X(-C) \to \OK_X \to \bigoplus_{c \in C} \iota_{c*}(W) \to 0
\]
that when tensored with $\Omega(C)$ yields
\[
0 \to \Omega \to \Omega(C) \to \bigoplus_{c \in C} \Omega(C) \otimes \iota_{c*}(W) \to 0.
\]
The associated long exact sequence in cohomology gives the following exact sequence of $W$-modules
\begin{equation}\label{les of sheaves of differentials}
0 \to H^0(X, \Omega) \to H^0(X,\Omega(C)) \to \bigoplus_{c \in C} H^0(X, \Omega(C) \otimes \iota_{c*}(W)) \to H^1(X, \Omega).
\end{equation}

The projection formula and the fact that $\Omega(C)$ is locally free of rank $1$ imply that $H^0(X, \Omega(C) \otimes \iota_{c*}(W)) = W$.  Using the fact that $H^1(X, \Omega)$ is a finitely generated $W$-module together with base change theorems and Serre duality over $W[1/p]$ and $W/p$, we see that $H^1(X, \Omega) = W$.  Tensoring \eqref{les of sheaves of differentials} with $W[1/p]$ and using base change theorems, we see from \cite[Lemma 3.1.13(ii)]{Ohta99} that $H^0(X, \Omega(C)) \to \oplus_{c \in C} W$ is the residue map and the map $\oplus_{c \in C} W \to W$ is just the sum, which is clearly surjective. 
\end{proof}

We briefly recall the formula for the residue map $\Res$ in terms of constant terms of $q$-expansions of modular forms; see \cite[\S 4.5]{Ohta99} for more details.  Given $c \in C(\Z_p[\zeta_N])$, its width is a positive integer $h_c$ such that, up to sign, the stabilizer of $c$ in $\Gamma$ can be conjugated to
\[
\bigl\langle \begin{pmatrix}
1 & h_c\\
0 & 1
\end{pmatrix}\ \bigr\rangle.
\]
In our case, $\infty$ has width 1, $[0]$ has width $N^2$, and all the other cusps of $X$ have width $N$.  The Fourier expansion of $f \in M_2(\Gamma; \Z_p[\zeta_N])$ at $c$ is of the form 
\[
f = \sum_{n = 0}^\infty a_n^c(f)q^{n/h_c},
\]
and 
\[
\Res(f) = \sum_{c \in C(\Z_p[\zeta_N])} h_ca_0^c(f)c.
\]
That is, writing $\Res_c(f)$ for the coefficient of $c$ in $\Res(f)$, we have $\Res_c(f) = h_ca_0^c(f)$.  Note that for the cusp $[0]$, the Atkin-Lehner involution $w_{N^2}$ incorporates the width of $[0]$, and hence $\Res_{[0]}(f) = a_0^{\infty}(w_{N^2}f)$.

\subsection{Hecke operators}
The Hecke operators $T_\ell$ for primes $\ell \nmid N$ and $U_N$ act on $M_2(\Gamma;\Z_p[\zeta_N])$ and this action preserves $S_2(\Gamma;\Z_p[\zeta_N])$. By the sequence \eqref{final exact sequence}, this gives an induced action on $\Div^0(C;\Z_p[\zeta_N])$. In fact, this action extends to $\Div(C;\Z_p[\zeta_N])$ in a way that makes \eqref{final exact sequence} Hecke-equivariant:

\begin{proposition}\label{Hecke formula on cusps}
Define an action of the Hecke operators $T_\ell$ with $\ell \nmid N$ and $U_N$ on $\Div(C;\Z_p[\zeta_N])$ as follows:
\begin{enumerate}
\item For a prime $\ell \neq N$ and $c \in C$ let
\[
T_\ell c = \begin{cases}(\ell+1)c & \mbox{ if $c=0, \infty$} \\
\ell[\ell x] + [\ell^{-1}x] & \mbox{ if $c=[x]$ for $x \in (\Z/N\Z)^\times$}.
\end{cases}
\]
\item For $c \in C$ let
\[
U_N c = \begin{cases} N\cdot [0] & c \neq \infty\\
\infty + \sum_{x \in (\Z/N\Z)^\times}[x] & c = \infty.
\end{cases}
\]
\end{enumerate}
Then the sequence \eqref{final exact sequence} is Hecke-equivariant.
\end{proposition}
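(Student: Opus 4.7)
The plan is to verify the Hecke-equivariance of the residue map $\Res$, from which equivariance of all of \eqref{final exact sequence} follows: the cokernel $\Z_p[\zeta_N] = H^1(X,\Omega)$ inherits a unique action under which $\Sigma$ is equivariant, namely the one by which $T_\ell$ acts as $\ell+1$ and $U_N$ acts as $N$, matching the total coefficients in the stated formulas. Using $\Res_c(f) = h_c a_0^c(f)$ recalled above, equivariance of $\Res$ reduces to an identity among constant terms of Hecke operators applied to $f$. I will expand each Hecke operator via its standard coset decomposition: $\alpha_i = \bigl(\begin{smallmatrix}1 & i\\0 & \ell\end{smallmatrix}\bigr)$ for $0 \le i \le \ell-1$ together with $\alpha_\ell = \bigl(\begin{smallmatrix}\ell & 0\\0 & 1\end{smallmatrix}\bigr)$ for $T_\ell$, and $\beta_i = \bigl(\begin{smallmatrix}1 & i\\0 & N\end{smallmatrix}\bigr)$ for $0 \le i \le N-1$ for $U_N$. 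Choosing $\sigma_c \in \SL_2(\Z)$ with $\sigma_c \cdot \infty = c$ for each cusp $c$, the quantity $a_0^c(Tf)$ is computed by expanding $(Tf)|_2 \sigma_c = \sum_i f|_2(\alpha_i \sigma_c)$ in the Fourier variable $q^{1/h_c}$ and rewriting each $\alpha_i \sigma_c$, via left-multiplication by some $\gamma_i \in \Gamma_0(N^2)$, in a normal form from which the summand's contribution to $a_0^c(Tf)$ becomes a Fourier coefficient of $f$ at an explicit cusp.

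The cusps $\infty$ and $[0]$ are handled by standard arguments. At $\infty$, the $q$-expansion identities $a_n^\infty(T_\ell f) = a_{n\ell}^\infty(f) + \ell a_{n/\ell}^\infty(f)$ and $a_n^\infty(U_N f) = a_{nN}^\infty(f)$ give the constant-term identities immediately, matching the $\infty$-coefficient of $T_\ell \infty$ and $U_N \infty$. At $[0]$, the Atkin--Lehner involution $w_{N^2}$, which commutes with $T_\ell$ for $\ell \nmid N$ and interchanges residues at $\infty$ and $[0]$, reduces the $T_\ell$-case to the $\infty$-case. For $U_N$ applied to a cusp $c \neq \infty$, each product $\beta_i \sigma_c$ has lower-left entry divisible by $N^2$, so after $\Gamma_0(N^2)$-reduction every summand lands in the orbit of $[0]$, producing the $N$-fold multiplicity in $U_N c = N[0]$.

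The main obstacle and the heart of the argument is the behavior of $T_\ell$ on the cusps $[x]$ for $x \in (\Z/N\Z)^\times$. Taking $\sigma_x = \bigl(\begin{smallmatrix}\tilde{x} & b\\N & d\end{smallmatrix}\bigr) \in \SL_2(\Z)$, one examines the $\ell+1$ products $\alpha_i \sigma_x$: the matrix $\alpha_\ell \sigma_x$ has top row $(\ell\tilde{x}, \ell b)$ and sends $\infty$ to $\ell\tilde{x}/N$, a representative of the cusp $[\ell x]$. For $0 \le i \le \ell-1$, the matrix $\alpha_i \sigma_x = \bigl(\begin{smallmatrix}\tilde{x}+iN & b+id\\\ell N & \ell d\end{smallmatrix}\bigr)$ sends $\infty$ to $(\tilde{x}+iN)/(\ell N)$, whose $\Gamma_0(N^2)$-orbit depends on whether $\ell$ divides $\tilde{x}+iN$. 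There is a unique $i$ modulo $\ell$ for which $\ell \mid \tilde{x}+iN$ (existing because $\gcd(\ell, N) = 1$); for this $i$ the fraction reduces to $m/N$ with $m = (\tilde{x}+iN)/\ell \equiv \ell^{-1}\tilde{x} \pmod{N}$, placing the orbit in $[\ell^{-1} x]$. For the other $\ell-1$ values of $i$, the fraction $(\tilde{x}+iN)/(\ell N)$ is in lowest terms with $\gcd$ to $N^2$ equal to $N$, and by the cusp classification the orbit is $[\tilde{x}\cdot \ell] = [\ell x]$. Combined with $\alpha_\ell \sigma_x$, this yields $\ell$ contributions to $[\ell x]$ and a single contribution to $[\ell^{-1} x]$, recovering $T_\ell [x] = \ell[\ell x] + [\ell^{-1} x]$. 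The widths $h_{[\ell x]}$ and $h_{[\ell^{-1} x]}$ both equal $h_{[x]}$, so no width correction intervenes; the essential technical work lies in the case distinction and the matrix bookkeeping across the $\ell+1$ cosets.
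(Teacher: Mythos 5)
Your plan — verify the Hecke-equivariance of $\Res$ directly by expanding $a_0^c(Tf) = \sum_i(\text{constant term of } f|_2(\alpha_i\sigma_c))$ — is a legitimate route, different in spirit from the paper's, which instead defines the Hecke operators geometrically as $O_\alpha = \psi_{\alpha*}\varphi_\alpha^*$ (so equivariance of the long exact sequence is automatic) and then computes the induced action on $\Div(C)$ via Ohta's observation that Serre duality swaps the standard Hecke action with its adjoint. But as written, your argument has a conceptual gap and a factual error, and both trace back to the same missed point.

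The paper explicitly notes that the stated action on $\Div(C)$ is \emph{not} the standard Hecke action on cusps, but the adjoint one (i.e.\ the action of $O_{\beta_\ell}$ with $\beta_\ell = \bigl(\begin{smallmatrix}\ell & 0\\0 & 1\end{smallmatrix}\bigr)$ rather than $\alpha_\ell = \bigl(\begin{smallmatrix}1 & 0\\0 & \ell\end{smallmatrix}\bigr)$). Your $T_\ell$ computation tallies how many $\alpha_i\sigma_x$ land in each cusp and matches the multiplicities ($\ell$ going to $[\ell x]$, one to $[\ell^{-1}x]$) against the coefficients in $T_\ell[x] = \ell[\ell x] + [\ell^{-1}x]$. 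But this identification of multiplicity with coefficient is not what $\Res$-equivariance asserts: it asserts $\Res_{[x]}(T_\ell f) = \ell\,\Res_{[\ell^{-1}x]}(f) + \Res_{[\ell x]}(f)$, and to verify this from $q$-expansions you must track the scalar $u_i/w_i$ arising when you write $\alpha_i\sigma_x = \gamma_i\sigma_{c_i}\bigl(\begin{smallmatrix}u_i & v_i\\0 & w_i\end{smallmatrix}\bigr)$. Concretely, the unique coset representative carrying $\sigma_x$ to $[\ell^{-1}x]$ has factor $\ell$, while each of the $\ell$ representatives carrying it to $[\ell x]$ has factor $1/\ell$, so the multiplicities are in fact \emph{inverted} by the scalars — $\ell \cdot (1/\ell)\cdot a_0^{[\ell x]}(f) + 1\cdot \ell\cdot a_0^{[\ell^{-1}x]}(f)$ — before the final bookkeeping matches the formula. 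For $T_\ell$ with $\ell \ne N$ this adjoint inversion is invisible in the end (the double cosets $\Gamma\alpha_\ell\Gamma$ and $\Gamma\beta_\ell\Gamma$ coincide), which is why your count accidentally lands on the right numbers, but without the scalars your argument does not actually prove the identity.

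The $U_N$ case is where the omission bites: the standard and adjoint actions genuinely differ, and your claim fails outright. You assert that for $c \ne \infty$ ``each product $\beta_i\sigma_c$ has lower-left entry divisible by $N^2$, so after $\Gamma_0(N^2)$-reduction every summand lands in the orbit of $[0]$.'' Both halves of this are false. Taking $\sigma_{[x]} = \bigl(\begin{smallmatrix}\tilde{x} & b\\N & d\end{smallmatrix}\bigr)$, one gets $\beta_i\sigma_{[x]} = \bigl(\begin{smallmatrix}\tilde{x}+iN & *\\N^2 & Nd\end{smallmatrix}\bigr)$, so $\beta_i\sigma_{[x]}\cdot\infty = (\tilde{x}+iN)/N^2$ which, by the paper's \cref{cusps} (with $N^2 \mid y$), lies in the $\Gamma_0(N^2)$-orbit of $\infty$, not of $[0]$. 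And for $c = [0]$, with $\sigma_{[0]} = \bigl(\begin{smallmatrix}0 & -1\\1 & 0\end{smallmatrix}\bigr)$, the lower-left entry of $\beta_i\sigma_{[0]}$ is $N$, not $N^2$, and the images $\beta_i\sigma_{[0]}\cdot\infty = i/N$ run over $[0]$ and all the $[x]$, not just $[0]$. The correct summands for the adjoint action on $[0]$ and $[x]$ both land in $[0]$ (this is what the paper's Lemma on $\Gamma\beta_N\Gamma$-coset representatives $\beta_N\bigl(\begin{smallmatrix}1 & 0\\iN^2 & 1\end{smallmatrix}\bigr)$ delivers), but the matrices you are using compute the wrong operator. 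The fix, if you want to pursue the $q$-expansion route, is to compute the scalar factors in full — at which point the ``swap'' that the paper packages via Ohta's duality observation will emerge from the matrix arithmetic and yield $\Res_{[x]}(U_Nf) = \Res_\infty(f)$ and $\Res_{[0]}(U_Nf) = N\sum_{c\ne\infty}\Res_c(f)$, matching the adjoint action.
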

\begin{proof}
Note that, to prove the proposition, it suffices to work with $\Q_p$-coefficients (or even $\C$-coefficients), so this is entirely classical.  Note also that this is \emph{not} the ``standard" action of $U_N$, but rather the adjoint action. To explain why this is, we must fix our conventions for Hecke operators.

For any $\alpha \in \GL_2(\Q)^+$, let $\Gamma_\alpha = \Gamma \cap \alpha^{-1}\Gamma\alpha$.  
Then we have two maps $\Gamma_\alpha \backslash \mathfrak{h}^* \to \Gamma \backslash \mathfrak{h}^*$: 
\[
\varphi_\alpha \colon \Gamma_\alpha z \mapsto \Gamma z \text{ and } \psi_\alpha \colon \Gamma_\alpha z \mapsto \Gamma \alpha z.
\]
Define $O_{\alpha} \coloneqq \psi_{\alpha *}\varphi_\alpha^*$ as an operator on all of the cohomology groups in \eqref{les of sheaves of differentials} (base-changed to $\C$).
For any prime $\ell$, let $\alpha_\ell \coloneqq \bigl(\begin{smallmatrix} 1 & 0\\
0 & \ell
\end{smallmatrix}\bigr)$, and let $T_\ell \coloneqq O_{\alpha_\ell}$ and define $U_N \coloneqq T_N$.  With this definition, it is clear that \eqref{final exact sequence} is Hecke-equivariant. To prove the proposition, it remains to see what this action is on $\Div(C)$.

We now consider the standard action of Hecke operators on $\Div(C)$. 
We have the identifications
\[
C = \Gamma \backslash \proj^1(\Q) \isoto \Gamma \backslash \SL_2(\Z)/B(\Z) \isoto \Gamma \backslash \GL_2(\Q)^+ / B(\Q)^+,
\]
where $B \subset \SL_2$ is the upper-triangular Borel.
The inverse of the first map is given by sending the class of $\bigl(\begin{smallmatrix} a & b\\
c & d
\end{smallmatrix} \bigr) \in \SL_2(\Z)$ to $[a \colon c] \in \proj^1(\Q)$, and the second map is induced by the natural inclusion $\SL_2(\Z) \hookrightarrow \GL_2(\Q)^+$.  For an element $\gamma \in \GL_2(\Q)^+$, let $[\gamma] \in C$ denote its class. For $\alpha \in \GL_2(\Q)^+$, the standard action of $O_\alpha$ on $C$ is given by $O_\alpha([\gamma]) = \sum_i [\alpha^{(i)} \gamma]$, where $\Gamma \alpha \Gamma = \coprod_i \Gamma \alpha^{(i)}$. 

The key observation, which we learned from \cite[Proposition 3.4.12]{Ohta99}, is that the identification of $\oplus_{c \in C} H^0(X_{\C}, \Omega(C)_{\C} \otimes \iota_{c*, \C}(\C))$ with $\Div(C)$ swaps standard Hecke operators with their adjoints. (Intuitively, this is because $\oplus_{c \in C} H^0(X_{\C}, \Omega(C)_{\C} \otimes \iota_{c*, \C}(\C))$ is the Serre-dual of $H^0(C,\OK_C)$.) Equivalently, to make \eqref{final exact sequence} Hecke-equivariant,  $T_\ell$ has to act on $\Div(C)$ via the standard action of $O_{\beta_\ell}$, where $\beta_\ell= \bigl(\begin{smallmatrix} \ell & 0\\
0 & 1
\end{smallmatrix}\bigr)$.

Given this, the proposition follows from the following two lemmas, whose 
simple proofs we omit.

\begin{lemma} For a prime $\ell$, let $\beta_\ell= \bigl(\begin{smallmatrix} \ell & 0\\
0 & 1
\end{smallmatrix}\bigr)$.
\begin{enumerate}
\item  Let $\ell \ne N$ be a prime.  A set of representatives for $\Gamma \backslash \Gamma \beta_\ell \Gamma$ is given by $\beta_\ell$ together with any $\ell$ matrices of the form
\[
\beta_\ell\bigl(\begin{smallmatrix}a & b\\
N^2 &d
\end{smallmatrix}\bigr)
\]
where $ad-bN^2=1$ and $d$ ranges over a set of representatives of $\Z/\ell\Z$.
\item A set of representatives for $\Gamma \backslash \Gamma \beta_N \Gamma$ is given by the $N$ matrices
\[
\beta_N\bigl(\begin{smallmatrix}1 &0\\
iN^2 &1
\end{smallmatrix}\bigr)
\]
for $i=0,\dots,N-1$.
\end{enumerate}
\end{lemma}

\begin{lemma}\label{cusps}
For an element $\gamma = \bigl(\begin{smallmatrix}
a & b\\
c & d
\end{smallmatrix} \bigr) \in \GL_2(\Q)^+$, we can determine its class in $C$ as follows. If $c=0$ then the class of $\gamma$ is $\infty$. If $c \ne 0$, write $\frac{a}{c}=\frac{x}{y}$ with $x,y \in \Z$ coprime.
\begin{itemize}
\item If $N^2 \mid y$, then the class of $\gamma$ is $\infty$;
\item if $N \nmid y$, then the class of $\gamma$ is $[0]$;
\item if $y=uN$ with $N \nmid u$, then the class of $\gamma$ is $[ux \bmod{N}]$.
\end{itemize}
\end{lemma}
\end{proof}

\subsection{Eisenstein series}
To prove \cref{main thm 2}\ref{congruence}, we consider congruences between elements of $S_2(\Gamma; \Z_p[\zeta_N])$ and an Eisenstein series with $T_\ell$-eigenvalue $\ell + 1$ for all primes $\ell \neq N$.  There are two such Eisenstein series in $M_2(\Gamma; \Z_p[\zeta_N])$, both old forms.  We write them explicitly.

Define 
\[
E_2(z) \coloneqq \frac{-1}{24} + \sum_{n \geq 1} \sigma(n)q^n, \]
where $\sigma(n) \coloneqq \sum_{0 < d | n} d$.  It is nearly holomorphic of weight 2 and level $1$.  Then 
\[
E_{2, N}(z) \coloneqq E_2(z) - NE_2(Nz)
\]
defines the unique Eisenstein series of weight $2$ and level $\Gamma_0(N)$; its $T_\ell$-eigenvalue is $\ell + 1$ for all primes $\ell \neq N$ and its $U_N$-eigenvalue is $1$.  The constant term of $E_{2, N}$ is $\frac{N-1}{24}$.  Let
\[
E(z) \coloneqq NE_{2, N}(z) - NE_{2,N}(Nz) \in M_2(\Gamma; \Z_p[\zeta_N]),
\]
which has $T_\ell$-eigenvalue $\ell + 1$ for all primes $\ell \neq N$ and $U_N$-eigenvalue $0$.  The constant term of its $q$-expansion at $\infty$ is $0$.

To understand congruences between these Eisenstein series and elements in $S_2(\Gamma; \Z_p[\zeta_N])$, we need to calculate the constant terms of their $q$-expansions at all cusps, not only $\infty$.  To do this, we make use of the Hecke action on the residue sequence \eqref{final exact sequence}.

Let $\T$ be the $\Z_p[\zeta_N]$-subalgebra of $\End_{\Z_p[\zeta_N]}(M_2(\Gamma;\Z_p[\zeta_N]))$ generated by $T_\ell$ for $\ell \nmid N$ and $U_N$, and let $\T' \subset \T$ be the subalgebra generated just by the $T_\ell$.  We consider $\Div^0(C;\Z_p[\zeta_N])$ as a $\T$-module via the action described in \cref{Hecke formula on cusps},  so there is an exact sequence of $\T$-modules
\begin{equation}
\label{eq:res}
0 \to S_2(\Gamma;\Z_p[\zeta_N]) \to M_2(\Gamma;\Z_p[\zeta_N]) \xrightarrow{\Res} \Div^0(C;\Z_p[\zeta_N]) \to 0.
\end{equation}

Let $I$ be the ideal of $\T'$ generated by the elements $T_\ell - \ell - 1$ for $\ell \neq N$ prime and $\m' = (I, p) \subset \T'$, which is maximal.  Write $\cc \coloneqq \sum_{x \in (\Z/N\Z)^\times} ([x] - [0])\in \Div^0(C;\Z_p[\zeta_N])$.  
\begin{proposition}\label{basis for cusps}
When $N \not\equiv 1 \bmod p$, the localization $\Div^0(C;\Z_p[\zeta_N])_{\m'}$  is a free $\Z_p[\zeta_N]$-module of rank 2 with basis
\[
\{\infty - [0]+\cc, \cc\}
\]
that is annihilated by $I$.  Moreover, $U_N$, with $U_N \cc=0$ and $U_N(\infty - [0]+\cc) =\infty - [0]+\cc$. 
\end{proposition}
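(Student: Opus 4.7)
The proof splits into easy verifications and a rank computation. Direct application of \cref{Hecke formula on cusps} yields $T_\ell(\infty - [0]) = (\ell+1)(\infty - [0])$ for primes $\ell \neq N$, while substituting $y = \ell x$ in the first and $y = \ell^{-1}x$ in the second summand of $T_\ell[x]$ gives $T_\ell \cc = (\ell+1)\cc$. So both elements, and therefore $\infty - [0] + \cc$, are annihilated by $I$; they are visibly $\Z_p[\zeta_N]$-linearly independent in the free module $\Div^0(C;\Z_p[\zeta_N])$. The $U_N$-claims follow similarly: $U_N\cc = (N-1)N[0] - (N-1)N[0] = 0$ and $U_N(\infty-[0]) = \infty + \sum_x[x] - N[0] = (\infty-[0]) + \cc$, so $U_N(\infty - [0] + \cc) = \infty - [0] + \cc$.

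For the rank statement, I exploit the commuting action of $H \coloneqq (\Z/N\Z)^\times$ on $\Div(C;\Z_p[\zeta_N])$ that fixes $\infty$ and $[0]$ and sends $[x] \mapsto [ax]$; comparison with \cref{Hecke formula on cusps} confirms this action commutes with every $T_\ell$ ($\ell \neq N$). Since $N \not\equiv 1 \pmod{p}$, $N-1$ is a unit in $\Z_p[\zeta_N]$, so the trivial-character idempotent $e_0 \coloneqq (N-1)^{-1}\sum_{a\in H} a$ is defined over $\Z_p[\zeta_N]$ and furnishes a Hecke-equivariant direct sum decomposition $V \coloneqq \Div^0(C;\Z_p[\zeta_N]) = M_0 \oplus M_1$ with $M_0 = e_0V$ the $H$-fixed submodule. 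Direct inspection shows $M_0$ is the free $\Z_p[\zeta_N]$-module spanned by $\{\infty - [0], \cc\}$, equivalently by $\{\infty - [0] + \cc, \cc\}$; combined with the first paragraph, $I$ annihilates $M_0$, so $M_0$ is already $\m'$-local and contributes $M_0$ to $V_{\m'}$.

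It remains to prove $(M_1)_{\m'} = 0$. Base changing to $\overline{\F}_p$ and using $p \nmid N-1$, the group ring $\overline{\F}_p[H]$ is semisimple, so $M_1 \otimes \overline{\F}_p$ decomposes as one-dimensional $\chi$-isotypics indexed by nontrivial characters $\chi \colon H \to \mu_{N-1}(\overline{\F}_p)$, each spanned by $v_\chi = \sum_x \chi(x)^{-1}[x]$. A parallel reindexing gives $T_\ell v_\chi = (\ell\chi(\ell) + \chi(\ell)^{-1})v_\chi$, so $T_\ell - (\ell+1)$ acts on the $\chi$-summand as the scalar $(\chi(\ell)-1)(\ell - \chi(\ell)^{-1})$. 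For each nontrivial $\chi$, Dirichlet's theorem on primes in arithmetic progressions provides a prime $\ell \neq N,p$ making this scalar a unit in $\overline{\F}_p^\times$: choose $a \in H$ with $\chi(a) \neq 1$ and $c \in \F_p^\times \setminus \{\chi(a)^{-1}\}$ (possible since $p \geq 5$; the constraint is vacuous when $\chi(a)^{-1} \notin \F_p^\times$), then take $\ell$ with $\ell \equiv a \bmod N$ and $\ell \equiv c \bmod p$. Hence each nontrivial $\chi$-summand of $M_1 \otimes \overline{\F}_p$ carries a system of Hecke eigenvalues distinct mod $p$ from the Eisenstein system, so $(M_1)_{\m'} \otimes \overline{\F}_p = 0$ and Nakayama forces $(M_1)_{\m'} = 0$. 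Consequently $V_{\m'} = M_0$ is free of rank $2$ over $\Z_p[\zeta_N]$ with the claimed basis. The main subtlety is the Dirichlet step, specifically handling characters whose values escape $\F_p^\times$.
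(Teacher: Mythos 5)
Your proof is correct, and the decomposition you use is ultimately the same as the paper's, but arrived at and exploited differently. Your submodules $M_0 = e_0 V$ and $M_1 = (1-e_0)V$ coincide, respectively, with the paper's $P$ (the $\Z_p[\zeta_N]$-span of $\infty-[0]$ and $\cc$) and $Q$ (the kernel of the section $s$ sending $[x]-[0]$ to $\tfrac{1}{N-1}\cc$); both constructions require $p \nmid N-1$, in one case to define the idempotent $e_0$, in the other to make $s$ integral. The divergence is in killing the localization of the complement. The paper shows directly that $\Div^0(C;\F_p(\zeta_N))[\m'] \subseteq P \otimes \F_p(\zeta_N)$ by picking two specific auxiliary primes --- one prime $\ell \equiv -1 \bmod p$ that is a primitive root mod $N$, and one prime $q \not\equiv -1 \bmod p$ that is a nonsquare mod $N$ --- and then concluding $Q_{\m'}=0$ via the Artinian local ring argument. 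You instead decompose $M_1 \otimes \overline{\F}_p$ into one-dimensional $\chi$-isotypic lines $v_\chi$ under $(\Z/N\Z)^\times$, read off the Hecke eigenvalue $\ell\chi(\ell)+\chi(\ell)^{-1}$ on each line, and produce for each nontrivial $\chi$ a prime $\ell$ on which $t_\ell$ acts as a unit. Your approach is structurally cleaner --- it explains $\emph{why}$ the complement has no Eisenstein component, namely because the sub-Eisenstein systems on $M_1$ correspond to nontrivial Dirichlet characters mod $N$ --- at the cost of needing a Dirichlet argument per character and a little care with characters whose values leave $\F_p$, which you handle correctly. The paper's two-prime argument is more elementary and avoids passing to $\overline{\F}_p$, but is a bit more opaque. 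Both are valid; both ultimately rest on Dirichlet's theorem and $p \nmid N-1$.
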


\begin{proof}
Set $W = \Z_p[\zeta_N]$.  Let $P$ be the $W$-span of $\infty - [0]+\cc$ and $\cc$ in $\Div^0(C;W)$.  The facts that $P$ is annihilated by $I$ and $U_N$ acts as described follow from \cref{Hecke formula on cusps}.  Since $N \not \equiv 1 \bmod p$, the same lemma shows that the section $s : \Div^0(C;W) \to P$ that is the identity on $\infty - [0]$ and sends $[x] - [0]$ to $\frac{1}{N-1}\cc$ for $x \in (\Z/N\Z)^\times$ is $\T'$-equivariant.  Letting $Q = \ker s$, we have a $\T'$-equivariant splitting $\Div^0(C;W)= P \oplus Q$.

To complete the proof, it is enough to show that $\Div^0(C;W/p)[\m'] = P \otimes W/p$. Indeed, this implies that $(Q \otimes W/p)[\m']=0$, from which we deduce that $(Q \otimes W/p)_{\m'}=0$ since $(\T'/p\T')_{\m'}$ is an Artin local ring, and hence $Q_{\m'}=0$.

Suppose $\mathfrak{a}=a_\infty(\infty - [0]) + \sum_{x \in (\Z/N\Z)^\times} a_x([x] - [0]) \in \Div^0(C;W/p)$ is annihilated by $\m'$. 
Equivalently, $\mathfrak{a}$ is annihilated by $t_\ell \coloneqq T_\ell-\ell-1$ for all primes $\ell \ne N$. We will use the formulas from \cref{Hecke formula on cusps} to show this implies that $a_x=a_1$ for all $x \in (\Z/N\Z)^\times$. Hence we will have
\[
\mathfrak{a} = a_\infty (\infty-[0]) + a_1 \cc \in P \otimes_W W/p.
\]

To show that $a_x=a_1$ for all $x \in (\Z/N\Z)^\times$, choose a prime $\ell$ such that $\ell \equiv -1 \pmod{p}$ and such that $\ell$ is a primitive root modulo $N$. Then, since $t_\ell \mathfrak{a}=0$, we have $a_{\ell x} = a_{\ell^{-1}x}$ for all $x$. Since $\ell$ is a primitive root, this implies that $a_x=a_1$ if $x$ is a square, and $a_x=a_{\ell}$ if $x$ is a non-square. Now take $q \not \equiv -1 \pmod{p}$ to be a prime that is a not a square modulo $N$. Then, since $t_q \mathfrak{a}=0$, we have
\[
(q+1)a_1 = qa_{q^{-1}} + a_q.
\]
Since $q$ is not a square, $a_q=a_{q^{-1}}=a_\ell$, so we have 
\[
(q+1)a_1=(q+1)a_\ell
\]
which implies $a_1=a_\ell$ because $q \not\equiv -1 \pmod{p}$. Hence $a_x=a_1$ for all $x \in (\Z/N\Z)^\times$, so $\mathfrak{a}$ is in $P \otimes W/p$. 
\end{proof}

\begin{corollary}\label{ResE}
We have $\Res(E) = \frac{N^2-1}{24} \cc$ and $\Res(E_{2,N}) = \frac{N-1}{24}(\infty -[0] +\cc)$. 
\end{corollary}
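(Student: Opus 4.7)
The plan is to combine the $T_\ell$- and $U_N$-eigenvalues of $E$ and $E_{2,N}$ with \cref{basis for cusps} to pin each residue down to a single scalar, and then to compute one Fourier coefficient of each to identify that scalar.

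First, since $T_\ell E = (\ell+1)E$ and $T_\ell E_{2,N} = (\ell+1) E_{2,N}$ for all primes $\ell \neq N$, Hecke-equivariance of the residue sequence forces $\Res(E)$ and $\Res(E_{2,N})$ into $\Div^0(C;\Z_p[\zeta_N])[I]$. The proof of \cref{basis for cusps} provides a $\T'$-equivariant decomposition $\Div^0(C;\Z_p[\zeta_N]) = P \oplus Q$ with $P = \Z_p[\zeta_N]\langle e_1, e_2\rangle$, where $e_1 = \infty - [0]+\cc$ and $e_2 = \cc$, annihilated by $I$; it also shows $Q_{\m'}=0$. I would upgrade this to the global statement $Q[I]=0$: since $(Q\otimes\Z_p[\zeta_N]/p)[I]=0$ is established in that proof and $Q$ is $\Z_p[\zeta_N]$-torsion-free as a summand of a free module, any $q\in Q[I]$ lies in $\bigcap_n p^nQ=0$ by Krull's intersection theorem on the semilocal Noetherian ring $\Z_p[\zeta_N]$ (with $p$ in the Jacobson radical). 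Hence $\Res(E),\Res(E_{2,N})\in P$.

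Next, $U_Ne_1=e_1$ and $U_Ne_2=0$ from \cref{basis for cusps}, together with $U_NE_{2,N}=E_{2,N}$ and $U_NE=0$, force $\Res(E_{2,N})=\alpha e_1$ and $\Res(E)=\gamma e_2$ for unique $\alpha,\gamma\in\Z_p[\zeta_N]$. To identify $\alpha$: the coefficient of $\infty$ in $e_1$ is $1$, so $\alpha=\Res_\infty(E_{2,N})=h_\infty \cdot a_0^\infty(E_{2,N})=(N-1)/24$. To identify $\gamma$: the coefficient of $[0]$ in $e_2$ is $-(N-1)$, so it suffices to compute $\Res_{[0]}(E)=a_0^\infty(E|w_{N^2})$. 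A short slash-operator calculation using $E_{2,N}|w_N=-E_{2,N}$ at level $\Gamma_0(N)$ yields $E_{2,N}(z)|w_{N^2}=-NE_{2,N}(Nz)$ and $E_{2,N}(Nz)|w_{N^2}=-N^{-1}E_{2,N}(z)$, so $E|w_{N^2}=E_{2,N}(z)-N^2E_{2,N}(Nz)$ has constant term $(1-N^2)(N-1)/24 = -(N-1)(N^2-1)/24$; dividing by $-(N-1)$ gives $\gamma=(N^2-1)/24$.

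The only step requiring real care is the Krull-intersection argument that promotes the $\m'$-local statement of \cref{basis for cusps} to the global claim $Q[I]=0$; everything else is either dictated by the eigenvalue structure or is a mechanical Atkin-Lehner calculation.
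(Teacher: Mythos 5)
Your proof is correct and follows essentially the same route as the paper: use Hecke-equivariance of $\Res$ together with \cref{basis for cusps} to see that $\Res(E)$ and $\Res(E_{2,N})$ are scalar multiples of $\cc$ and $\infty-[0]+\cc$ respectively, then determine the scalars via $\Res_\infty(E_{2,N})=a_0(E_{2,N})$ and the Atkin--Lehner computation of $\Res_{[0]}(E)=a_0(w_{N^2}E)$. The one place you work harder than the paper is in upgrading the localized statement $\Div^0(C;\Z_p[\zeta_N])_{\m'}\cong P_{\m'}$ to the global claim $Q[I]=0$ via Krull intersection; this is a valid way to stay integral, though the paper implicitly relies on the simpler observation that the relevant simultaneous eigenspaces of $\Div^0(C;\Q_p(\zeta_N))$ are one-dimensional (as follows from the explicit formulas of \cref{Hecke formula on cusps}), which already pins $\Res(E)$ and $\Res(E_{2,N})$ down to scalar multiples of the stated vectors.
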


\begin{proof}
Since $\Res$ is Hecke equivariant, it follows that $\Res(E) = \alpha \cc$, and $\Res(E_{2,N})=\beta(\infty-[0]+\cc)$, where $\alpha = \frac{1}{N-1}\Res_{[0]} E$ and $\beta=\Res_\infty(E_{2,N})=a_0(E_{2,N})$.  This proves the claim for $E_{2,N}$. Letting $w_{N^2}$ be the Atkin-Lehner operator, we can calculate $\Res_{[0]} E = a_0(w_{N^2}E)$.  As $(w_{N^2}E)(z) = E_{2, N} - N^2E_{2, N}(Nz)$, we see that $a_0(w_{N^2}E) = (1-N^2)a_0(E_{2,N}) = \frac{(1 - N^2)(N-1)}{24}$ and hence $\alpha = \frac{N^2-1}{24}$.
\end{proof}

Let $\m \subset \T$ be the maximal ideal generated by $\m'$ and $U_N$, and let $\m_\mathrm{old} \subset \T$ be the maximal ideal generated by $\m'$ and $U_N-1$. We consider the localizations at these two maximal ideals.

\begin{lemma}
\label{lem:localization and congruence} \hfill 
\begin{enumerate}
\item We have $S_2(\Gamma;\Z_p[\zeta_N])_\m \ne 0$ if and only if there is a form $f \in S_2(\Gamma;\Z_p[\zeta_N])$ such that $a_n(f) \equiv a_n(E) \bmod{p}$ for all $n \ge 1$.
\item We have $S_2(\Gamma;\Z_p[\zeta_N])_{\m_\mathrm{old}} \ne 0$ if and only if there is a form $f \in S_2(\Gamma;\Z_p[\zeta_N])$ such that $a_n(f) \equiv a_n(E_{2,N}) \bmod{p}$ for all $n \ge 1$.
\end{enumerate}
\end{lemma}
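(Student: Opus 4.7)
My plan is to use the $q$-expansion principle for forms of level $\Gamma$ over $\Z_p[\zeta_N]$ (valid since $p \nmid N$) together with the standard Hecke formulas $a_n(T_\ell g) = a_{n\ell}(g) + \ell a_{n/\ell}(g)$ for primes $\ell \nmid N$ and $a_n(U_N g) = a_{Nn}(g)$ to translate between the Hecke-eigenvalue conditions defining $\m$ (resp.\ $\m_\mathrm{old}$) and Fourier-coefficient congruences modulo $p$. Both parts will be handled in parallel.

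For the implication $(\Leftarrow)$, given an $f$ satisfying the stated Fourier-coefficient congruence, the Hecke formulas combined with the fact that $E$ and $E_{2,N}$ are Hecke eigenforms with the claimed eigenvalues show that every Fourier coefficient of $(T_\ell - \ell - 1)f$ and of $U_N f$ (resp.\ $(U_N - 1)f$) is divisible by $p$. The $q$-expansion principle then places these forms in $p\, S_2(\Gamma;\Z_p[\zeta_N])$. Since $a_1(E) = N$ and $a_1(E_{2,N}) = 1$ are both units mod $p$, $f$ itself is not in $p\, S_2(\Gamma;\Z_p[\zeta_N])$, so its image in $S_2(\Gamma;\Z_p[\zeta_N])/p$ is a nonzero class annihilated by $\m$ (resp.\ $\m_\mathrm{old}$), giving the relevant localization.

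For the implication $(\Rightarrow)$, I would first observe that since $\T/p\T$ is a finite $\F_p$-algebra, hence Artinian, the conditions $S_2(\Gamma;\Z_p[\zeta_N])_\m \ne 0$ and $(S_2(\Gamma;\Z_p[\zeta_N])/p)[\m] \ne 0$ are equivalent (by Nakayama applied to each factor of the Artinian decomposition). Lift a nonzero element of the latter to some $F \in S_2(\Gamma;\Z_p[\zeta_N])$; then $(T_\ell - \ell - 1)F$ and $U_N F$ both lie in $p\, S_2(\Gamma;\Z_p[\zeta_N])$, giving the mod-$p$ recursions $a_{n\ell}(F) + \ell a_{n/\ell}(F) \equiv (\ell+1) a_n(F)$ for all $n$ and primes $\ell \nmid N$, and $a_{Nn}(F) \equiv 0$ for all $n \ge 1$.

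The main obstacle is to show $a_1(F) \not\equiv 0 \pmod p$. If $a_1(F) \equiv 0$, the recursions inductively force $a_n(F) \equiv 0 \pmod p$ for every $n \ge 1$; by the $q$-expansion principle $F$ then lies in $p\, M_2(\Gamma;\Z_p[\zeta_N])$, and since $S_2(\Gamma;\Z_p[\zeta_N])$ is saturated in $M_2(\Gamma;\Z_p[\zeta_N])$ (as it is the kernel of $\Res$ to the $\Z_p[\zeta_N]$-torsion-free module $\Div(C;\Z_p[\zeta_N])$), this yields $F \in p\, S_2(\Gamma;\Z_p[\zeta_N])$, contradicting the choice of $F$. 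Once $a_1(F)$ is known to be a unit mod $p$, I would scale $F$ by a unit in $\Z_p[\zeta_N]$ so that $a_1(F) \equiv N \pmod p$, at which point the recursions give $a_n(F) \equiv a_n(E) \pmod p$ for all $n \ge 1$. Part (2) is proved by the same argument, replacing the recursion $a_{Nn}(F) \equiv 0$ by $a_{Nn}(F) \equiv a_n(F) \pmod p$ coming from $(U_N - 1)F \in p\, S_2(\Gamma;\Z_p[\zeta_N])$ and normalizing $a_1(F) \equiv 1 \pmod p$.
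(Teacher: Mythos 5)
Your proof is correct and follows essentially the same route as the paper's: reduce modulo $p$, use that $\T_\m/p\T_\m$ is Artinian to extract a nonzero element of the $\m$-torsion $S_2(\Gamma;\F_p(\zeta_N))[\m]$, and read off the Fourier-coefficient congruence from the Hecke relations. You are in fact slightly more careful than the paper at the normalization step — the paper asserts directly that annihilation by $\m$ gives $a_n(\bar f)\equiv a_n(E)$, whereas this really requires first ruling out $a_1\equiv 0\bmod p$ (which you do via the $q$-expansion principle and saturation of $S_2$ in $M_2$) and then rescaling so $a_1\equiv a_1(E)$.
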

\begin{proof}
We prove (1), the proof of (2) being identical.  If such a form $f$ exisits, it clearly gives a non-zero element of $S_2(\Gamma;\Z_p[\zeta_N])_\m$. Conversely, suppose that $S_2(\Gamma; \Z_p[\zeta_N])_{\m} \ne 0$. Then, reducing modulo $p$, we have $S_2(\Gamma; \F_p(\zeta_N))_{\m} \ne 0$. Since $\T_{\m}/p \T_{\m}$ is Artinian, this implies that there is a non-zero element $\bar{f}$ of $S_2(\Gamma; \F_p(\zeta_N))[{\m}]$. Since $\bar{f}$ is annihilated by $\m$, we have  $a_n(\bar{f})=a_n(E) \bmod{p}$ for all $n$.  We can now let $f \in S_2(\Gamma; \Z_p[\zeta_N])$ be any lift of $\bar{f}$.
\end{proof}

\begin{theorem}
\label{thm:existance of congruence}
Assume $N \not\equiv 1 \bmod p$.  Then $S_2(\Gamma; \Z_p[\zeta_N])_{\m_\mathrm{old}}=0$ and there is a $\T_{\m}$-equivariant short exact sequence
\begin{equation}
\label{eq:res at m}
0 \to S_2(\Gamma; \Z_p[\zeta_N])_{\m} \to M_2(\Gamma; \Z_p[\zeta_N])_{\m} \xrightarrow{\Res} \Z_p[\zeta_N]\cdot \cc\to 0.
\end{equation}
Moreover, $S_2(\Gamma;\Z_p[\zeta_N])_\m=0$ if and only if $N \not\equiv -1 \bmod p$. 

In particular, there exists $f = \sum_{n \geq 1} a_nq^n \in S_2(\Gamma; \Z_p[\zeta_N])$ such that $a_\ell \equiv \ell + 1 \bmod p$ for all primes $\ell \neq N$ if and only if $N \equiv -1 \bmod p$.
\end{theorem}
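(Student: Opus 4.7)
My plan is to localize the residue sequence \eqref{eq:res} at the two maximal ideals $\m$ and $\m_\mathrm{old}$ sitting above $\m'$, and then combine this with a direct $q$-expansion analysis modulo $p$ at the cusp $\infty$. By \cref{basis for cusps}, $\Div^0(C;\Z_p[\zeta_N])_{\m'}$ is $\Z_p[\zeta_N]$-free of rank $2$ on the basis $\{\infty-[0]+\cc,\,\cc\}$, and $U_N$ acts diagonally with eigenvalues $1$ and $0$ on these two basis vectors. Further localization therefore picks out one basis vector at each maximal ideal: $\Div^0(C;\Z_p[\zeta_N])_\m = \Z_p[\zeta_N]\cdot\cc$ and $\Div^0(C;\Z_p[\zeta_N])_{\m_\mathrm{old}} = \Z_p[\zeta_N]\cdot(\infty-[0]+\cc)$. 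Localizing \eqref{eq:res} at $\m$ then yields \eqref{eq:res at m} immediately.

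To prove $S_2(\Gamma;\Z_p[\zeta_N])_{\m_\mathrm{old}} = 0$, I identify the characteristic-$0$ Hecke eigensystems landing in $\m_\mathrm{old}$. An eigenform in $M_2(\Gamma;\Q_p(\zeta_N))$ with $T_\ell = \ell+1$ for $\ell \neq N$ and $U_N = 1$ must be Eisenstein: cuspidal candidates from $\Gamma_0(N)$ are excluded by Mazur's theorem under the assumption $N \not\equiv 1 \bmod p$, and the classical Atkin--Lehner fact that any newform of level $\Gamma_0(N^2)$ with trivial character has $a_N = 0$ rules out $U_N = 1$ on cuspidal newforms. Among Eisenstein series, other level-$N^2$ Eisenstein newforms with $T_\ell$-eigenvalue $\chi(\ell)+\ell\chi^{-1}(\ell)$ for nontrivial $\chi \bmod N$ cannot reduce to $\ell+1$ since $p \nmid N-1$, so only $E_{2,N}$ contributes. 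Consequently $M_2(\Gamma;\Z_p[\zeta_N])_{\m_\mathrm{old}}$ is free of rank $1$ generated by $E_{2,N}$, and since $\Res(E_{2,N}) = \frac{N-1}{24}(\infty-[0]+\cc)$ is a unit multiple of the generator of $\Div^0(C;\Z_p[\zeta_N])_{\m_\mathrm{old}}$, the residue map is an isomorphism on this module and $S_2(\Gamma;\Z_p[\zeta_N])_{\m_\mathrm{old}} = 0$.

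For the final equivalence I analyze $M_2(\Gamma;\F_p[\zeta_N])[\m]$ directly via $q$-expansions. For any $\bar h \in M_2(\Gamma;\F_p[\zeta_N])[\m]$, the Hecke recursions give $a_n(\bar h) = a_1(\bar h)\sigma(m)$ when $n = N^k m$ with $\gcd(m,N)=1$, while $U_N\bar h = 0$ forces $a_{Nn}(\bar h) = 0$ for every $n \geq 0$ and, crucially, $a_0(\bar h) = 0$. Matching with the explicit coefficients $a_n(\bar E) = N\sigma(n)$ for $N \nmid n$ and $a_n(\bar E) = 0$ for $N \mid n$ shows that $\bar h - \frac{a_1(\bar h)}{N}\bar E$ has trivial $q$-expansion at $\infty$; by the $q$-expansion principle on the geometrically connected smooth curve $X_0(N^2)/\F_p[\zeta_N]$, this forces $M_2(\Gamma;\F_p[\zeta_N])[\m] = \F_p[\zeta_N]\cdot \bar E$. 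By \cref{ResE}, this $1$-dimensional space meets $S_2$ precisely when $\Res(\bar E) = \frac{(N-1)(N+1)}{24}\bar\cc = 0$, which, under $N \not\equiv 1 \bmod p$, holds iff $N \equiv -1 \bmod p$. Nakayama's lemma then transfers the conclusion to $S_2(\Gamma;\Z_p[\zeta_N])_\m$, and the ``in particular'' statement is immediate from \cref{lem:localization and congruence}(1) together with the identification $S_2(\Gamma;\Z_p[\zeta_N])_{\m'} = S_2(\Gamma;\Z_p[\zeta_N])_\m$ furnished by Paragraph 2. The main obstacle is the input in Paragraph 2 — especially the Atkin--Lehner vanishing $a_N = 0$ for level-$N^2$ newforms with trivial character — which is what lets us rule out the $\m_\mathrm{old}$-part of $S_2$ and pin down the characteristic-$0$ eigensystems above $\m_\mathrm{old}$.
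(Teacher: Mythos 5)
Your overall architecture matches the paper's: localize the residue sequence \eqref{eq:res} at the two maximal ideals above $\m'$ using the explicit basis and $U_N$-action from \cref{basis for cusps}, compute $\Res(E)$ via \cref{ResE}, and deduce the existence criterion by a $q$-expansion argument. The handling of \eqref{eq:res at m}, the equivalence with $N \equiv -1 \bmod p$, and the ``in particular'' statement track the paper closely (you compute $M_2(\Gamma;\F_p[\zeta_N])[\m]$ directly and conclude it is $\F_p[\zeta_N]\cdot\bar E$, whereas the paper argues by contradiction from $S_2(\Gamma;\Z_p[\zeta_N])_\m \ne 0$ via \cref{lem:localization and congruence}; these are the same argument rearranged). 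The genuine divergence is in proving $S_2(\Gamma;\Z_p[\zeta_N])_{\m_\mathrm{old}} = 0$. You catalogue the characteristic-zero Hecke eigensystems in $\m_\mathrm{old}$: Mazur rules out cuspidal oldforms from level $\Gamma_0(N)$, the Atkin--Lehner vanishing $a_N(f) = 0$ for newforms of level $\Gamma_0(N^2)$ with trivial nebentypus rules out $U_N \equiv 1$ on cuspidal newforms, and the Eisenstein newforms $E_2^{\chi,\chi^{-1}}$ are excluded (in fact they also have $U_N$-eigenvalue $0$, which would have saved you the character computation). This is correct but requires newform theory as an extra input. The paper instead uses a purely $q$-expansion argument: a cuspform $f$ with $a_n(f) \equiv a_n(E_{2,N}) \bmod p$ for all $n \geq 1$ yields $\bar E_{2,N} - \bar f \in M_2(\Gamma;\F_p(\zeta_N))$ with constant $q$-expansion equal to the unit $\frac{N-1}{24}$, which cannot exist in weight $2$ when $p \geq 5$ since $2 \not\equiv 0 \bmod (p-1)$. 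The paper's route is more elementary and self-contained; yours is a valid alternative that gives a cleaner picture of the eigensystems but at the cost of invoking Atkin--Lehner theory at ramified primes.
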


\begin{proof}
Suppose that $S_2(\Gamma; \Z_p[\zeta_N])_{\m_\mathrm{old}} \ne 0$.  Then, by \cref{lem:localization and congruence}, there is an $f \in S_2(\Gamma;\Z_p[\zeta_N])$ with $a_n(f) \equiv a_n(E_{2,N}) \bmod{p}$ for all $n \ge 1$. Since $a_0(E_{2,N}) \not\equiv 0 \bmod{p}$, this implies that there is a non-zero constant in $M_2(\Gamma; \F_p(\zeta_N))$, a contradiction.

The exact sequence \eqref{eq:res at m} follows directly from \eqref{eq:res} and \cref{basis for cusps}.  If $N \equiv -1 \pmod{p}$, then let $g \in M_2(\Gamma; \Z_p[\zeta_N])_{\m}$ be such that $\Res(g)=\cc$ and let $f=E-\frac{N^2-1}{24}g$. Then $f \equiv E \pmod{p}$ and, since $\Res(f)=0$, we have $f \in S_2(\Gamma; \Z_p[\zeta_N])_{\m}$.

Conversely,  suppose that $N \not\equiv \pm 1 \pmod{p}$ and, for the sake of contradiction, that $S_2(\Gamma;\Z_p[\zeta_N])_\m \ne 0$.  Let $\bar{E} \in M_2(\Gamma;\F_p(\zeta_N))$ be the reduction of $E$ modulo $p$.  By \cref{lem:localization and congruence}, there is an $f \in S_2(\Gamma;\F_p(\zeta_N))$ with $a_n(f) = a_n(\bar{E})$ for all $n \ge 1$.
Since also $a_0(\bar{E})=a_0(f)=0$, this implies that $f=\bar{E}$ by the $q$-expansion principle. This implies $\bar{E} \in S_2(\Gamma;\F_p(\zeta_N))$, but $\Res(\bar{E}) \ne 0$ by \cref{ResE}, a contradiction.

For the final statement, simply note that such an $f$ must belong to $S_2(\Gamma;\Z_p[\zeta_N])_{\m'}$ and that
\[
S_2(\Gamma;\Z_p[\zeta_N])_{\m'} = S_2(\Gamma;\Z_p[\zeta_N])_{\m_\mathrm{old}}  \oplus S_2(\Gamma;\Z_p[\zeta_N])_{\m} 
\]
since $\m_\mathrm{old}$ and $\m$ are the only maximal ideals of $\T$ containing $\m'$.
\end{proof}

Let $\T_\m^0$ be the maximal quotient of $\T_\m$ acting faithfully on $S_2(\Gamma; \Z_p[\zeta_N])_\m$.  Recall that by duality, minimal prime ideals $\Pp$ of $\T_\m^0$ are in one-to-one correspondence with normalized eigenforms in $S_2(\Gamma; \overline{\Z}_p)$ that are congruent to $E$ modulo the unique prime above $p$ in the $p$-adic ring $\T_\m^0/\Pp$.  By \cref{thm:existance of congruence}, we know that $\T_\m^0 \neq 0$ when $N \equiv -1 \bmod p$.  Moreover, we know that the eigenform corresponding to any minimal prime must be a newform because, by Mazur's theorem, there are no oldforms that are congruent to $E$.
Thus we have the following corollary, which gives \cref{main thm 2}\ref{congruence}.

\begin{corollary}\label{eigenform congruence}
Assume that $N \equiv -1 \bmod p$. Then there is a newform $f$ of weight $2$ and level $\Gamma_0(N^2)$ and a prime ideal $\p$ over $p$ in the ring of integers $\mathcal{O}_f$ of the Hecke field of $f$ such that $a_\ell(f) \equiv 1+\ell \bmod \p$ for all primes $\ell$.
\end{corollary}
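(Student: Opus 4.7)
The strategy is to upgrade the existence of a cusp form congruent to $E$ (Theorem~\ref{thm:existance of congruence}) to the existence of an eigenform via Hecke algebra duality, and then rule out oldforms via Mazur's theorem.

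First I would observe that, by \cref{thm:existance of congruence}, the hypothesis $N \equiv -1 \bmod p$ guarantees $S_2(\Gamma;\Z_p[\zeta_N])_\m \ne 0$, and hence that the faithful quotient $\T_\m^0$ is a nonzero semi-local $\Z_p$-algebra, finite over $\Z_p$. Choosing any minimal prime $\Pp \subset \T_\m^0$, the quotient $\T_\m^0/\Pp$ is a complete local domain, finite over $\Z_p$, so it embeds into $\overline{\Z}_p$. By the standard duality between normalized eigenforms in $S_2(\Gamma;\overline{\Z}_p)$ and $\Z_p$-algebra homomorphisms $\T \to \overline{\Z}_p$, the composition $\T \twoheadrightarrow \T_\m^0/\Pp \hookrightarrow \overline{\Z}_p$ produces a normalized eigenform $f = \sum_n a_n(f) q^n$ whose Hecke eigenvalues lie in $\T_\m^0/\Pp$. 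Take $\p \subset \OK_f$ to be the unique prime above $p$ in the ring of integers of the Hecke field of $f$.

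Next I would check the congruences. Since $\Pp \supset \m \supset \m'$, for each prime $\ell \ne N$ the operator $T_\ell - (\ell+1)$ lies in $\p$ under the above quotient map, yielding $a_\ell(f) \equiv \ell+1 \bmod \p$. For $\ell = N$, the operator $U_N$ lies in $\m$ and hence in $\p$, so $a_N(f) \equiv 0 \bmod \p$; and $N+1 \equiv 0 \bmod p$ because $N \equiv -1 \bmod p$, so the desired congruence $a_N(f) \equiv 1+N \bmod \p$ also holds.

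The step I expect to be the main obstacle is establishing that $f$ is actually a newform, and I would argue this by contradiction. Since $S_2(\SL_2(\Z);\Z_p[\zeta_N]) = 0$, any oldform in $S_2(\Gamma;\Z_p[\zeta_N])$ comes from level $\Gamma_0(N)$. If $f$ were such an oldform, the system of Hecke eigenvalues $\{a_\ell(f)\}_{\ell \ne N}$ would arise from a cusp form in $S_2(\Gamma_0(N); \OK_f)$ that is congruent modulo $\p$ to the unique normalized Eisenstein series $E_{2,N}$ of weight $2$ and level $\Gamma_0(N)$. By Mazur's theorem (\cite[Proposition II.9.7]{Mazur77}), such a congruence can only occur when $N \equiv 1 \bmod p$. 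Since $p \ge 5$ the hypothesis $N \equiv -1 \bmod p$ is incompatible with $N \equiv 1 \bmod p$, yielding the required contradiction; hence $f$ is a newform.
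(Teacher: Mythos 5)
Your argument is correct and follows the same route the paper takes: use \cref{thm:existance of congruence} to get $\T_\m^0 \neq 0$, pick a minimal prime, invoke Hecke duality to obtain a normalized eigenform congruent to $E$, and rule out oldforms via Mazur's theorem at level $\Gamma_0(N)$. You merely flesh out two details the paper leaves implicit, namely the $\ell = N$ congruence (via $U_N \in \m$ together with $N+1 \equiv 0 \bmod p$) and the reduction of the oldform case to level $\Gamma_0(N)$ using that $S_2(\SL_2(\Z)) = 0$; this is a faithful expansion rather than a different approach.
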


\section{An unramified $p$-extension of $\Q(N^{1/p})$ when $N \equiv -1 \bmod p$}\label{class group}
In this section we use a congruence between a cusp form and the Eisenstein series $E$ from \cref{eigenform congruence} to give a modular construction of a degree $p$ unramified extension of $\Q(N^{1/p})$ when $N \equiv -1 \bmod p$, thus proving \cref{main thm} and the second half of \cref{main thm 2}.  Throughout this section we assume $N \equiv -1 \bmod p$.  

As in \cref{eigenform congruence}, fix an eigenform $f \in S_2(\Gamma; \overline{\Z}_p)$ that is congruent to $E$ modulo the prime $\p$ lying over $p$.  
Let $\Q_p(f)/\Q_p$ be the field generated by the Hecke eigenvalues of $f$, $\OK$ its ring of integers, $\varpi$ a uniformizer, and $\F = \OK/\varpi$.  Let $s \geq 1$ be the largest integer such that $f \equiv E \bmod \varpi^s$, so $s$ is the largest integer such that $a_\ell(f) \equiv 1+\ell \bmod{\varpi^s}$ for all primes $\ell \neq N$.

We now recall a Galois-theoretic interpretation of the integer $s$. Let $\rho_f \colon G_{\Q,Np} \to \GL(V_f) \cong \GL_2(\Q_p(f))$ the Galois representation corresponding to $f$ and let $t_f = \mathrm{tr}(\rho_f): G_{\Q,Np} \to \OK$ be its trace. Recall the \emph{reducibility ideal} of $t_f$, as defined in \cite[Section 1.5]{BC2009}: it is the smallest ideal $J \subset \OK$ such that $t_f \equiv \psi_1 + \psi_2 \bmod{J}$ for characters $\psi_i: G_{\Q} \to (\OK/J)^\times$. 

\begin{lemma}
\label{lem:red ideal}
The reducibility ideal of $t_f$ is $\varpi^s\OK$.
\end{lemma}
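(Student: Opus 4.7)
The plan is to prove the two inclusions $J(t_f) \subseteq \varpi^s \OK$ and $\varpi^s \OK \subseteq J(t_f)$ separately. For the upper bound, since $a_\ell(f) \equiv 1 + \ell \equiv (1 + \varepsilon)(\Frob_\ell) \pmod{\varpi^s}$ for all primes $\ell \nmid Np$, Chebotarev density yields $t_f \equiv 1 + \varepsilon \pmod{\varpi^s}$ as pseudorepresentations of $G_{\Q,Np}$, so $\varpi^s \OK$ witnesses a reducibility decomposition, giving $J(t_f) \subseteq \varpi^s \OK$.

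For the reverse inclusion, I would argue by contradiction. Suppose $J(t_f) \subseteq \varpi^{s+1} \OK$, so $t_f \equiv \psi_1 + \psi_2 \pmod{\varpi^{s+1}}$ for continuous characters $\psi_i \colon G_\Q \to (\OK/\varpi^{s+1})^\times$. Reducing modulo $\varpi$ and using $\bar\varepsilon \ne 1$ (since $p \geq 5$), I may take $\bar\psi_1 = 1$ and $\bar\psi_2 = \bar\varepsilon$. The residual multiplicity-freeness then invokes the Hensel-type uniqueness of \cite{BC2009}: the pair $(\psi_1, \psi_2)$ is determined by $t_f$, and matching pseudorepresentation determinants forces $\psi_1 \psi_2 \equiv \varepsilon \pmod{\varpi^{s+1}}$. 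The upper-bound congruence together with this uniqueness gives $\psi_1 \equiv 1 \pmod{\varpi^s}$, so I may write $\psi_1 \equiv 1 + \varpi^s \eta \pmod{\varpi^{s+1}}$ for an additive character $\eta \colon G_\Q \to \F$. Expanding the trace identity using $\psi_2 \equiv \varepsilon - \varpi^s \varepsilon \eta$ yields
$t_f \equiv 1 + \varepsilon + \varpi^s \eta \cdot (1 - \varepsilon) \pmod{\varpi^{s+1}}$, and the maximality of $s$ forces $\eta \ne 0$.

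The crux is then to show $\eta$ is unramified everywhere; class field theory for $\Q$ (trivial class group) would then force $\eta = 0$, yielding the contradiction. I would invoke Ribet's lemma to choose an $\OK$-lattice for $\rho_f$ so that $\rho_f \bmod \varpi^{s+1}$ is upper triangular with diagonal $(\psi_1, \psi_2)$. At primes $\ell \nmid Np$, $\rho_f|_{I_\ell} = 1$ forces $\psi_1|_{I_\ell} = 1$. At $p$, the unit $a_p(f) \equiv 1 \pmod \varpi$ makes $\rho_f$ $p$-ordinary, so $\rho_f|_{G_{\Q_p}}$ is upper triangular with an unramified character $\alpha$ on the upper left, and $\alpha = \psi_1|_{G_{\Q_p}}$ by residual distinctness. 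At $N$, I would use $a_N(f) \equiv 0 \pmod \varpi$ together with $p \nmid N-1$ (which follows from $N \equiv -1 \pmod p$) to rule out both principal-series conductor patterns for $\rho_f|_{G_{\Q_N}}$, concluding that $\rho_f|_{G_{\Q_N}}$ is supercuspidal; then an analysis of how such an induced representation becomes reducible modulo $\varpi^{s+1}$ with residually unramified semisimplification forces $\psi_1|_{G_{\Q_N}}$ to be unramified.

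The hardest step will be the analysis at $N$: in the supercuspidal case, $\rho_f|_{G_{\Q_N}}$ is irreducible over $\OK$, yet its reduction modulo $\varpi^{s+1}$ must be upper triangular with an unramified upper-left character. This requires leveraging the specific induced structure (from a character of a quadratic extension of $\Q_N$) together with the residual unramifiedness of $\bar\rho_f|_{G_{\Q_N}}$, and it is here that the hypothesis $N \equiv -1 \pmod p$ plays an essential role, both through $p \nmid N - 1$ and through $\bar\varepsilon(\Frob_N) = -1 \neq 1$.
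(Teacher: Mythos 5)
Your setup is sound: the upper bound $J(t_f) \subseteq \varpi^s\OK$ via Chebotarev, the reduction to $\psi_1\psi_2 \equiv \epsilon$, the parametrization $\psi_1 \equiv 1 + \varpi^s\eta$, and the use of ordinarity at $p$ to see that $\eta|_{I_p} = 0$ all match the paper's argument in substance. The place where you go astray is the step you yourself flag as ``the hardest'': the analysis at $N$.

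Your proposal at $N$ --- ruling out principal-series/Steinberg types for $\rho_f|_{G_{\Q_N}}$, concluding supercuspidality, and then arguing about how an induced representation becomes reducible modulo $\varpi^{s+1}$ --- is both more than you need and not actually carried out. As written it is not a proof: ``an analysis \ldots forces $\psi_1|_{G_{\Q_N}}$ to be unramified'' is precisely the missing lemma, and the reduction mod $\varpi^{s+1}$ of a supercuspidal local representation need not respect the induced structure in any simple way, so the claimed conclusion is genuinely nontrivial. More importantly, the entire local detour is unnecessary. Once you know $\eta$ (equivalently $\psi$) is unramified at $p$, it factors through the maximal abelian pro-$p$ extension of $\Q$ that is unramified outside $N$. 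By class field theory that extension sits inside $\Q(\zeta_{N^\infty})$ with Galois group a quotient of $\Z_N^\times$, whose pro-$p$ part is the $p$-Sylow of $(\Z/N\Z)^\times$; this is trivial because $N \equiv -1 \bmod p$ forces $p \nmid N-1$. That single observation kills $\eta$ outright --- you never need to verify unramifiedness at $N$ separately, and you certainly do not need the local type of $\rho_f$ at $N$. (Equivalently, if you do insist on showing $\eta|_{I_N} = 0$ directly, it already follows from the fact that $\Frob_N$ acts on tame inertia by raising to the $N$-th power: a $p$-power-order character must then kill $I_N$ since $\gcd(N-1,p)=1$.) This is exactly how the paper handles it, and it makes the lemma a short Galois-cohomological/class-field-theoretic computation rather than an exercise in local Langlands.
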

\begin{proof}
Let $J \subset \OK$ be the reducibility ideal of $t_f$ and write $t_f \equiv \psi_1 + \psi_2 \bmod{J}$ for characters $\psi_i: G_{\Q} \to (\OK/J)^\times$.  Since $\det(\rho_f)=\epsilon$, we can write $\psi_1=\psi\epsilon$ and $\psi_2=\psi^{-1}$ for a character $\psi: G_{\Q,Np} \to (\OK/J)^\times$ with $\psi \equiv 1 \bmod{\varpi\OK}$.  In particular, $\psi$ has $p$-power order. We claim that $\psi$ is trivial. Assuming this claim, we see that $J=\varpi^t\OK$ for the largest integer $t$ such that $t_f \equiv \epsilon+1 \bmod{\varpi^t}$, so $t=s$ by Cheboterov density. 

It remains to show that $\psi$ is trivial. For this, note that $f$ is ordinary, since $a_p(f) \equiv a_p(E) \equiv 1 \bmod{\varpi\OK}$.  Hence we have $t_f|I_p = \epsilon +1$, so we see that $\psi$ is unramified at $p$. Then $\psi$ factors through the maximal unramified-outside-$N$ abelian pro-$p$ extension of $\Q$, which is trivial since $N \not \equiv 1 \bmod{p}$. Hence $\psi$ is trivial. 
\end{proof}

 The goal of this section is to prove \cref{main thm 2}\ref{character}, which in turn implies \cref{main thm}.  In particular, we show the following.

\begin{theorem}\label{finding chi}
There is an everywhere unramified order-$p$ character $\chi : G_{\Q(N^{1/p})} \to (\OK/\varpi^{s+1})^\times$ such that $t_f|G_{\Q(N^{1/p})} \equiv \chi\epsilon + \chi^{-1} \bmod \varpi^{s+1}$.
\end{theorem}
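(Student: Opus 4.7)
My plan is to construct $\chi$ by choosing a clever lattice for $\rho_f$, using Kummer theory to kill a residual upper-right matrix entry after restricting to $H = G_{\Q(N^{1/p})}$, and extracting $\chi$ from the resulting diagonal entry. First I would choose a $G_{\Q,Np}$-stable $\OK$-lattice $L \subset V_f$ in which the residual representation is a non-split upper triangular extension $\bar\rho_f = \bigl(\begin{smallmatrix}\omega & \bar b \\ 0 & 1\end{smallmatrix}\bigr)$; such a lattice exists by the classical Ribet argument since the two Jordan-H\"older characters $\omega$ and $1$ are distinct (using $p \geq 5$). Combining \cref{lem:red ideal} with the generalized matrix algebra formalism, in a suitable basis of $L$ I can write $\rho_f \bmod \varpi^{s+1} = \bigl(\begin{smallmatrix}A & B \\ C & D\end{smallmatrix}\bigr)$ with $A \equiv \epsilon$, $D \equiv 1 \bmod{\varpi^s}$, $B: G_{\Q,Np} \to \OK/\varpi^{s+1}$, and $C: G_{\Q,Np} \to \varpi^s\OK/\varpi^{s+1}$ (the key point being that the product of the off-diagonal modules generates the reducibility ideal $\varpi^s\OK$).

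Next I identify the residual extension class $\bar b \in H^1(G_{\Q,Np}, \F(\omega))$. Because $a_p(f) \equiv p+1 \equiv 1 \bmod \varpi$, the form $f$ is $p$-ordinary (this was already observed in the proof of \cref{lem:red ideal}), so $\bar b$ is unramified at $p$. By Kummer theory, the subspace of $H^1(G_{\Q,Np}, \F(\omega)) \cong \Z[1/Np]^\times \otimes \F_p$ of classes unramified at $p$ is one-dimensional for $p$ odd, generated by the Kummer class of $N$. Hence $\bar b$ is a non-zero scalar multiple of that Kummer class, and since $N^{1/p} \in \Q(N^{1/p})$ the restriction $\bar b|_H$ is a coboundary: $\bar b|_H = \delta\bar u$ for some $\bar u \in \F(\omega) \cong \mu_p$.

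Now lift $\bar u$ to any $u \in \OK/\varpi^{s+1}$ and conjugate $\rho_f|_H$ by $\bigl(\begin{smallmatrix}1 & u \\ 0 & 1\end{smallmatrix}\bigr)$ to obtain $\bigl(\begin{smallmatrix}A' & B' \\ C & D'\end{smallmatrix}\bigr)$ with $B'|_H \equiv 0 \bmod \varpi$. Since $C$ takes values in $\varpi^s\OK$, the cross-term $B'(g)C(h)$ lies in $\varpi \cdot \varpi^s\OK = \varpi^{s+1}\OK$, so the cocycle relation $A'(gh) = A'(g)A'(h) + B'(g)C(h)$ forces $A'|_H$ to be a homomorphism modulo $\varpi^{s+1}$, and $D'|_H$ is then also a character (equivalently, use $\det\rho_f = \epsilon$). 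Defining $\chi \coloneqq A'|_H \cdot \epsilon|_H^{-1}$, the congruences $A \equiv \epsilon$ and $D \equiv 1 \bmod \varpi^s$ give $\chi \equiv 1 \bmod \varpi^s$, $D'|_H = \chi^{-1}$, and finally $t_f|_H \equiv \chi\epsilon + \chi^{-1} \bmod \varpi^{s+1}$.

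It remains to verify that $\chi$ is everywhere unramified and non-trivial. Unramifiedness outside $\{N, p\}$ is inherited from $\rho_f$; at $p$, the ordinary filtration gives $C|_{I_p} = 0$ and $A|_{I_p} = \epsilon|_{I_p}$, so $\chi|_{I_p \cap H} = 1$; at $N$, one first rules out the principal-series alternative for $\rho_f|_{G_{\Q_N}}$ (any pair of conductor-$N$ characters would remain ramified mod $\varpi$ because $p \nmid N-1$, contradicting the unramified residual semisimplification $\omega \oplus 1$), so $\rho_f|_{G_{\Q_N}}$ is supercuspidal, and the same conjugation performed locally at $N$ combined with the vanishing of the local Kummer class of $N$ on the inertia group $I_N \cap H$ of $\Q(N^{1/p})$ yields $\chi|_{I_N\cap H} = 1$. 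For non-triviality, if $\chi \equiv 1 \bmod \varpi^{s+1}$ then $t_f|_H \equiv \epsilon + 1 \bmod \varpi^{s+1}$, and together with maximality of $s$ and a careful analysis of the class $\bar c \coloneqq C/\varpi^s \bmod \varpi \in H^1(G_{\Q,Np}, \F(\omega^{-1}))$ and its restriction to $H$ one derives a contradiction. The main obstacle will be this last step: a naive Chebotarev approach fails because the Newton identity $t_f(g^p) \equiv t_f(g)^p \bmod p$ makes the $p$-power Frobenii in $H$ always consistent with the reducible trace; the argument must instead exploit that $\bar c|_H$ is a non-trivial unramified-at-$p$ class to pin down $\tilde\chi := (\chi - 1)/\varpi^s$ as a non-zero homomorphism $H \to \F$.
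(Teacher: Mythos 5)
Your overall strategy tracks the paper's closely: choose a Ribet lattice so that $\bar\rho_f = \bigl(\begin{smallmatrix}\omega & \bar b \\ 0 & 1\end{smallmatrix}\bigr)$ with $\bar b$ a non-trivial cohomology class, use the reducibility ideal and the generalized-matrix-algebra structure to bound the lower-left entry $C$ by $\varpi^s$, identify $\bar b$ with the Kummer class $\kappa_N$, observe that $\kappa_N$ dies on $H = G_{\Q(N^{1/p})}$ so that $B|_H$ picks up an extra factor of $\varpi$, and extract $\chi$ from the diagonal mod $\varpi^{s+1}$. The extraction of \eqref{eq:t_f on G_F} is essentially identical. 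But the three supporting verifications all have problems.

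\textbf{Identification of $\bar b$.} You assert that ordinariness (i.e.\ $a_p(f)$ a unit) implies $\bar b$ is unramified at $p$. Ordinariness alone does not give this; the paper instead invokes Serre's \emph{finie} condition, i.e.\ that $\rho_f$ arises from a finite flat group scheme over $\Z_p$ (automatic in weight $2$ with $p\nmid N^2$), to conclude that $\bar b$ lies in the finite-flat (here: unramified-at-$p$, since the weights are $\{0,1\}$) subspace, hence is a multiple of $\kappa_N$. Your conclusion is right, but the justification as stated is too weak.

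\textbf{Unramifiedness at $p$.} You claim ``the ordinary filtration gives $C|_{I_p}=0$.'' The lattice $T$ was chosen via Ribet's lemma to make $\bar b$ a non-trivial class; there is no guarantee that the $G_{\Q_p}$-stable line provided by ordinariness coincides with $e_1\OK$ in this basis. The two lines agree mod $\varpi$ (since they reduce to the unique $\omega$-eigenline for $I_p$) but can differ at higher $\varpi$-order, and one cannot simply assert $C|_{I_p}=0$ in $\OK/\varpi^{s+1}$. The paper avoids the matrix entries entirely: writing $\chi = 1+\alpha\varpi^s$ and using $t_f|_{I_p}=\epsilon+1$, one gets the relation $(\omega-1)\alpha=0$ on $I_p\cap G_F$ and then finishes by a short additive argument to force $\alpha|_{I_p\cap G_F}=0$. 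Your argument here has a real gap.

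\textbf{Unramifiedness at $N$.} Your route through principal-series vs.\ supercuspidal types at $N$ is unnecessary and introduces extra claims to verify. The paper uses a one-line argument: the maximal abelian tame quotient of $I_N\cap G_F$ has order $N-1$, which is prime to $p$ because $N\equiv -1\bmod p$; a character of $p$-power order into $(\OK/\varpi^{s+1})^\times$ is therefore automatically unramified at $N$.

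\textbf{Non-triviality.} This is the most serious gap, and you acknowledge it. Your intuition that the class $\bar c = (C/\varpi^s)\bmod\varpi \in H^1(G_{\Q,Np},\F(\omega^{-1}))$ must enter, and that Chebotarev on traces is insufficient, is correct. But the plan ``exploit that $\bar c|_H$ is a non-trivial unramified-at-$p$ class'' is not the right lever. The paper's argument is: suppose $\chi$ is trivial, look at $r(G_{F(\zeta_p)})$ and use that it is \emph{normal} in $r(G_\Q)$. Conjugating a putative element $\sm{x}{\varpi y}{\varpi^s z}{1}$ of $r(G_{F(\zeta_p)})$ by $r(\sigma)$ for $\sigma$ with $\bar b(\sigma)\ne0$, the bottom-right entry acquires a term $zb(\sigma)\varpi^s$, which normality forces to vanish mod $\varpi^{s+1}$; hence $\bar c|_{G_{F(\zeta_p)}}=0$. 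Since $\bar c$ is a non-trivial class cutting out a degree-$p$ extension $K/\Q(\zeta_p)$ on which $\Gal(\Q(\zeta_p)/\Q)$ acts by $\omega^{-1}$, this forces $K=F(\zeta_p)$; but the Galois action on $\Gal(F(\zeta_p)/\Q(\zeta_p))$ is $\omega$, and $\omega\ne\omega^{-1}$ for $p\ge5$, a contradiction. Without the normality step, your sketch does not close.

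In summary: same architecture, but the three lemma-level arguments that make the theorem true are each either imprecise, incorrect as stated, or absent. The non-triviality argument in particular requires the normality-of-image idea that your proposal does not identify.
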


\subsection{Constructing an unramified $p$-extension}
For a $G_\Q$-stable $\OK$-lattice $T \subset V_f$, let $\rho_T \colon G_{\Q,Np} \to \GL(T)$ denote the corresponding representation.

\begin{lemma}\label{lem:rhoT shape}
There is a $G_\Q$-stable lattice $T \subset V_f$ such that
\[
\rho_T \bmod{\varpi^s} = \begin{pmatrix} \epsilon & \kappa_N\\
0 & 1
\end{pmatrix},
\]
where $\kappa_N \colon G_\Q \to (\OK/\varpi^s)(1)$ is the Kummer cocycle accociated to $N$. Moreover, if we write $\rho_T=\sm{a}{b}{c}{d}$, then the ideal generated by $c(\sigma)$ for all $\sigma \in G_\Q$ is $\varpi^s\OK$.
\end{lemma}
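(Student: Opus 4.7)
The plan is to apply the reducibility-ideal machinery of Bella\"iche--Chenevier \cite[\S 1.5]{BC2009}: since the reducibility ideal of $t_f$ is $\varpi^s\OK$ by \cref{lem:red ideal}, we can choose a $G_\Q$-stable lattice $T$ so that $\rho_T \bmod \varpi^s$ becomes non-split upper triangular, and then identify the upper-right cocycle with $\kappa_N$ via a Kummer-theoretic computation of a Selmer group.

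For the lattice, I would pick $T$ such that $\rho_T \bmod \varpi^s = \sm{\psi_1}{b}{0}{\psi_2}$ with $\{\psi_1,\psi_2\} \equiv \{\epsilon,1\} \bmod \varpi$ and $\psi_1\psi_2 \equiv \epsilon$. The ``moreover'' statement about the ideal generated by $c(\sigma)$ then comes for free: by construction $c(\sigma) \in \varpi^s\OK$ for all $\sigma$, and if these values generated a proper sub-ideal of $\varpi^s\OK$ then $\rho_T$ would be upper triangular modulo $\varpi^{s+1}$, contradicting the size of the reducibility ideal. The ordering $\psi_1=\epsilon$, $\psi_2=1$ is forced by the ordinarity of $f$ noted in the proof of \cref{lem:red ideal}: $\rho_f|_{G_{\Q_p}}$ is itself upper triangular with unramified quotient character $\psi \equiv 1 \bmod \varpi^s$, so the global sub-line of $\rho_T \bmod \varpi^s$ must restrict at $p$ to the $\epsilon$-line. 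This pins down $\psi_1=\epsilon$ globally because the only pro-$p$ abelian extension of $\Q$ unramified outside $Np$ and unramified at $p$ is trivial, since $N \not\equiv 1 \bmod p$.

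To identify $b$ with $\kappa_N$, view it as a class in $H^1(G_{\Q,Np},(\OK/\varpi^s)(1))$; by Kummer theory this group is generated over $\OK/\varpi^s$ by $\kappa_N$ and $\kappa_p$ (for $p$ odd). The weight-$2$ ordinary structure of $\rho_f$ (with level prime to $p$) forces $\rho_f|_{G_{\Q_p}}$ to be peu ramifi\'ee, which together with $\psi \equiv 1 \bmod \varpi^s$ forces $[b]$ to be unramified at $p$ and hence to lie in $(\OK/\varpi^s) \cdot \kappa_N$ (since $\kappa_p$ is ramified at $p$). The class $[b]$ is nonzero mod $\varpi$, otherwise the reducibility ideal would be smaller than $\varpi^s\OK$; hence $[b] = u\kappa_N$ for a unit $u \in (\OK/\varpi^s)^\times$. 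Conjugating by $\mathrm{diag}(1,u^{-1})$ and then modifying $e_2$ by a suitable multiple of $e_1$ to kill any remaining coboundary produces a basis in which $b = \kappa_N$ exactly. The main obstacle is controlling the local condition at $p$ at the level of $\varpi^s$ rather than merely $\varpi$: this requires the quantitative ordinarity $\psi \equiv 1 \bmod \varpi^s$, which itself follows from the reducibility ideal being $\varpi^s\OK$ and $\det\rho_f = \epsilon$.
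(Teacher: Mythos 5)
Your proposal is essentially the paper's argument, but with the logical order reversed in a way that leaves a small gap at the start. The paper begins with Ribet's lemma to produce a lattice $T$ with $\rho_T \bmod \varpi$ upper-triangular and non-split, so that $B$ is the unit ideal; then from \cref{lem:red ideal} together with \cite[Proposition 1.5.1]{BC2009} (which says $BC$ equals the reducibility ideal for this lattice) it deduces $C = \varpi^s\OK$, and \emph{only then} concludes that $\rho_T \bmod \varpi^s$ is upper-triangular. You instead start by asserting that a lattice exists with $\rho_T \bmod \varpi^s$ already non-split upper-triangular, and then recover $C = \varpi^s\OK$ and the non-splitness of $\bar b$ by contradiction arguments. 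Those contradiction arguments are valid (if $c \equiv 0 \bmod \varpi^{s+1}$, or if $\bar b$ were a coboundary so that after conjugation $b \in \varpi\OK$, then $a,d$ would be characters mod $\varpi^{s+1}$, forcing the reducibility ideal into $\varpi^{s+1}$), but the existence of the lattice you start from is exactly what the paper's Ribet-then-BC argument establishes, so you are implicitly assuming the hardest part. The rest is a genuine match: your pinning of $\psi_1 = \epsilon, \psi_2 = 1$ (via ordinarity and the triviality of pro-$p$ extensions of $\Q$ unramified outside $Np$ and at $p$ when $N \not\equiv 1 \bmod p$) parallels the proof of \cref{lem:red ideal}, and your peu-ramifi\'ee/ordinary argument that $[b]$ lies in the $\kappa_N$-line is the same as the paper's invocation of the \emph{finie} condition of Serre. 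I'd just make the opening step explicit: quote Ribet's lemma for a mod $\varpi$ non-split lattice, then derive $C=\varpi^s\OK$ from $BC$ equaling the reducibility ideal, rather than positing the mod $\varpi^s$ shape up front.
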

\begin{proof}
Using Ribet's lemma \cite[Proposition 2.1]{Ribet76}, we choose $T$ such that
\[
\rho_T \bmod{\varpi}= \begin{pmatrix} \omega & \bar{b}\\
0 & 1
\end{pmatrix}
\]
where $\bar{b} \colon G_\Q \to \F(1)$ is a cocycle with non-trivial cohomology class.  Write the entries of $\rho_T$ as
$\rho_T = \sm{a}{b}{c}{d}$.
By \cref{lem:red ideal} and \cite[Proposition 1.5.1, pg.~35]{BC2009}, we see that $BC = \varpi^s\OK$, where $B, C \subset \OK$ are the ideals generated by $b(\sigma)$ and $c(\sigma)$, respectively,  for all $\sigma \in G_{\Q,Np}$.  Since $\bar{b}$ is non-trivial, we see that $B$ is the unit ideal, so $C= \varpi^s\OK$.  From this we see that $\rho_T \bmod \varpi^s$ has the desired upper-triangular shape, and it remains to describe the cocycle $b \bmod{\varpi^s}$.

The class of $b \bmod{\varpi^s}$ belongs to 
\[H^1(G_{\Q,Np}, (\OK/\varpi^s)(1))
\]
which is generated by the Kummer classes $\kappa_N$ and $\kappa_p$ of $N$ and $p$ by Kummer theory.  However, the fact that $\rho_T$ is \emph{finie} in the sense of Serre \cite{Serre1987} (that is, it comes from the generic fiber of a finite flat group scheme over $\Z_p$) forces $b \bmod{\varpi^s}$ to lie in the subgroup genertated by $\kappa_N$ (see \cite[Section 2]{Serre1987}). Since $\bar{b}$ is non-trivial, we see that $b \bmod{\varpi^s}$ must be a unit multiple of $\kappa_N$, and we can change basis to ensure that $\rho_T \bmod{\varpi^s}$ has the desired form.
\end{proof}

Fix a lattice $T \subset V_f$ as in the lemma, and let $\rho=\rho_T = \sm{a}{b}{c}{d}$ and $\bar{b} = b \bmod{\varpi}$.  Since $\bar{b}$ is the Kummer cocycle associated to $N$, we see that $\bar{b}|G_F=0$, where $F \coloneqq \Q(N^{1/p})$. This implies that $b|G_F$ takes values in $\varpi \OK$. Since we also know that $c$ takes values in $\varpi^s \OK$, we see that the reducibility ideal (in the sense of \cite[Section 1.5]{BC2009}) of $t_f |G_F$ is contained in $\varpi^{s+1}$. This implies that
\[
a|G_F \bmod{\varpi^{s+1}}: G_F \to (\OK/\varpi^{s+1})^\times
\]
is a group homomorphism. Define a character $\chi: G_F\to (\OK/\varpi^{s+1})^\times$ by
\[
\chi \epsilon = a|G_F \bmod{\varpi^{s+1}}.
\]
Note that, since $\det(\rho)=\epsilon$, we have $d|G_F \bmod{\varpi^{s+1}} = \chi^{-1}$, so
\begin{equation}
\label{eq:t_f on G_F}
t_f|G_F=\chi\epsilon + \chi^{-1} \bmod{\varpi^{s+1}}
\end{equation}
To complete the proof of \cref{finding chi}, it suffices to prove that $\chi$ has order $p$ and is unramified everywhere, which we prove in the following two propositions.

\begin{proposition}
The character $\chi$ is non-trivial.
\end{proposition}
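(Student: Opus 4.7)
The natural strategy is proof by contradiction with the minimality of $s$. Assume $\chi \equiv 1 \bmod \varpi^{s+1}$; then by \eqref{eq:t_f on G_F} we have $t_f|_{G_F} \equiv \epsilon + 1 \bmod \varpi^{s+1}$. Since $t_f$ is a class function, this congruence automatically holds on the conjugation closure $\bigcup_g g G_F g^{-1}$. An analysis of $\Gal(\tilde F/\Q) \cong \F_p \rtimes \F_p^\times$, where $\tilde F := \Q(\zeta_p, N^{1/p})$ is the Galois closure of $F$, shows that this closure covers all of $G_\Q$ except elements projecting to non-trivial translations, namely $G_{\Q(\zeta_p)} \setminus G_{\tilde F}$. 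It therefore suffices to extend the congruence to this remaining set, as that would contradict the definition of $s$.

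To analyze the remaining elements, write $\rho = \sm{a}{b}{c}{d}$ as in \cref{lem:rhoT shape} and set $\gamma := c/\varpi^s \bmod \varpi$ and $\delta_1 := (d-1)/\varpi^s \bmod \varpi$. A direct expansion of the determinant relation $ad - bc = \epsilon$ modulo $\varpi^{s+1}$, using the shape of $\rho$ mod $\varpi^s$, yields the global function identity
\[
(t_f - \epsilon - 1)/\varpi^s \equiv \bar\kappa_N \cdot \gamma + (1-\omega)\cdot \delta_1 \pmod{\varpi}.
\]
On $G_{\Q(\zeta_p)}$, where $\omega \equiv 1$, the right side reduces to $\bar\kappa_N \cdot \gamma$; and since $\bar\kappa_N|_{G_{\Q(\zeta_p)}}$ is the homomorphism cutting out $\tilde F/\Q(\zeta_p)$, which vanishes precisely on $G_{\tilde F}$, the remaining congruence is equivalent to $\gamma|_{G_{\Q(\zeta_p)}} = 0$.

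To establish this vanishing, one notes that the twist $\gamma/\omega$ defines a 1-cocycle in $Z^1(G_{\Q,Np}, \F(\omega^{-1}))$, and by inflation--restriction (using that $H^1((\Z/p)^\times, \F(\omega^{-1})) = 0$ since the group has order coprime to $p$) its class $[\gamma]$ is determined by its restriction to $G_{\Q(\zeta_p)}$. The conjugation-invariance step yields the additional constraint $\bar\kappa_N \cdot \gamma = (\omega-1)\delta_1$ on $G_\Q \setminus G_{\Q(\zeta_p)}$. For any $\tau_0 \in G_{\tilde F}$, the function $\gamma(\tau_0)\bar\kappa_N - \delta_1(\tau_0)(\omega-1)$ is a global 1-cocycle in $Z^1(G_\Q, \F(\omega))$ that vanishes on the complement $G_\Q \setminus G_{\Q(\zeta_p)}$; an elementary argument shows that a cocycle vanishing on the complement of a subgroup must vanish everywhere, and restricting to $G_{\Q(\zeta_p)}$ (where $\omega-1 = 0$ but $\bar\kappa_N$ is non-trivial) then forces $\gamma(\tau_0) = 0$. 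An analogous manipulation with $\tau_1 \in G_{\Q(\zeta_p)} \setminus G_{\tilde F}$ extends the vanishing to all of $G_{\Q(\zeta_p)}$, so $[\gamma] = 0$ and $\gamma = (1-\omega)\phi$ for some $\phi \in \F$.

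Finally, the global change of basis $M := I + \varpi^s \sm{0}{0}{-\phi}{0}$ has the effect of making the $(2,1)$-entry of $M^{-1}\rho M$ congruent to $\varpi^s(\phi(\omega-1) + \gamma) \equiv 0 \bmod \varpi^{s+1}$. Hence $M^{-1}\rho M$ is upper triangular mod $\varpi^{s+1}$, and its diagonal entries are characters $G_\Q \to (\OK/\varpi^{s+1})^\times$ whose sum is $t_f$ mod $\varpi^{s+1}$. This exhibits the reducibility ideal of $t_f$ as contained in $\varpi^{s+1}\OK$, contradicting \cref{lem:red ideal}. The main obstacle I anticipate is executing the cocycle-vanishing step cleanly at elements of $G_{\Q(\zeta_p)} \setminus G_{\tilde F}$, which requires identifying the right linear combination of $\bar\kappa_N$, $\gamma$, and $\omega$ that forms a global cocycle amenable to the complement-of-subgroup vanishing lemma.
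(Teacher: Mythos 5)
Your overall strategy is sound and genuinely different from the paper's, but it contains a real gap that you yourself flag. Up to the point where you conclude $\gamma(\tau_0)=0$ for $\tau_0 \in G_{\tilde F}$, your argument is correct: the derived identity $(t_f-\epsilon-1)/\varpi^s \equiv \bar\kappa_N\gamma + (1-\omega)\delta_1 \pmod{\varpi}$ is right, the claim that $\sigma \mapsto \gamma(\tau_0)\bar\kappa_N(\sigma) - \delta_1(\tau_0)(\omega(\sigma)-1)$ is a $Z^1(G_\Q,\F(1))$ cocycle vanishing on $G_\Q\setminus G_{\Q(\zeta_p)}$ does follow from subtracting the constraint at $\sigma$ from the constraint at $\sigma\tau_0$ (using $\omega(\tau_0)=1$, $\bar\kappa_N(\tau_0)=0$), and the complement-of-subgroup vanishing lemma is correct. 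This recovers exactly what the paper proves by observing that $r(G_{F(\zeta_p)})$ is normal in $r(G_\Q)$ and computing the $(2,2)$-entry of a conjugate; the two are the same calculation in different clothes.

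The gap is the step you call ``analogous manipulation with $\tau_1 \in G_{\Q(\zeta_p)}\setminus G_{\tilde F}$.'' It is not analogous: for such $\tau_1$ one has $\bar\kappa_N(\tau_1)\ne 0$, and subtracting the constraint at $\sigma$ from the constraint at $\sigma\tau_1$ leaves the identity $\gamma(\tau_1)\bar\kappa_N(\sigma\tau_1) + \bar\kappa_N(\tau_1)\gamma(\sigma) = (\omega(\sigma)-1)\delta_1(\tau_1)$, which has the extra term $\bar\kappa_N(\tau_1)\gamma(\sigma)$ and is therefore not a fixed-scalar combination of cocycles vanishing on $G_\Q\setminus G_{\Q(\zeta_p)}$. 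One can patch this (e.g.\ comparing two $\sigma,\sigma'$ with $\omega(\sigma)=\omega(\sigma')$ and letting $\sigma'^{-1}\sigma$ range over $G_{\Q(\zeta_p)}$), but it is not the stated cocycle lemma, and you correctly identify it as the unfinished obstacle. At exactly this point the paper takes a much shorter route: since $\bar c=\gamma$ represents a nonzero class in $H^1(G_{\Q,Np},\F(-1))$ (from \cref{lem:rhoT shape}), $\gamma|_{G_{\Q(\zeta_p)}}$ is a nonzero homomorphism on which $\Gal(\Q(\zeta_p)/\Q)$ acts by $\omega^{-1}$; but $\gamma|_{G_{\tilde F}}=0$ forces its fixed field to be $\tilde F=F(\zeta_p)$, where the Galois action is by $\omega$, so $\omega=\omega^{-1}$ --- impossible for $p>3$. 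This bypasses the $\tau_1$ step entirely. Finally, your closing change-of-basis by $M=I+\varpi^s\sm{0}{0}{-\phi}{0}$ is unnecessary: once $\gamma|_{G_{\Q(\zeta_p)}}=0$, the identity $(t_f-\epsilon-1)/\varpi^s \equiv \bar\kappa_N\gamma + (1-\omega)\delta_1$ together with vanishing on the conjugation closure already gives $t_f\equiv\epsilon+1\bmod\varpi^{s+1}$ on all of $G_\Q$, which directly exhibits the reducibility ideal inside $\varpi^{s+1}\OK$ and contradicts \cref{lem:red ideal}.
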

\begin{proof}
Let $r = \rho \bmod{\varpi^{s+1}}$. Suppose, for the sake of contradiction, that $\chi$ is trivial. Then we have
\[
r(G_{F(\zeta_p)})  \subset \left \{ \ttmat{x}{\varpi y}{\varpi^s z}{1} \in \GL_2(\OK/\varpi^{s+1}) \right \}. 
\]
Now choose $\sigma \in G_\Q$ such that $\bar{b}(\sigma) \ne 0$, and let $\tau \in G_{F(\zeta_p)}$ and write $r(\tau) = \sm{x}{\varpi y}{\varpi^s z}{1}$.
Since $x, d(\sigma) \equiv 1 \bmod \varpi$, we compute that
\[
r(\sigma) \ttmat{x}{\varpi y}{\varpi^s z}{1} r(\sigma)^{-1} = \ttmat{*}{*}{*}{1-zb(\sigma)\det(r(\sigma))^{-1}\varpi^s}.
\]
Since $r(G_{F(\zeta_p)})$ is normal in $r(G_\Q)$, we conclude that $zb(\sigma)\det(r(\sigma))^{-1}\varpi^s=0$. Since $b(\sigma)$ and $\det(r(\sigma))$ are units, we see that $z \in \varpi\OK/\varpi^{s+1}$. This shows that, for any $\tau \in G_{F(\zeta_p)}$, we have $c(\tau) \equiv 0 \bmod{\varpi^{s+1}}$.

By \cref{lem:rhoT shape}, we can write $c = \varpi^s \tilde{c}$ for a cochain $\tilde{c}: G_{\Q,Np} \to \OK$ such that $\bar{c} \coloneqq \tilde{c} \bmod{\varpi}$ is non-trivial.  It follows that $\bar{c}$ is a cocycle $\bar{c}:G_\Q \to \F(-1)$ with non-trivial class.  Then $\bar{c}|G_{\Q(\zeta_p)}$ is a homomorphism cutting out a degree-$p$ extension $K/\Q(\zeta_p)$ such that $\Gal(\Q(\zeta_p)/\Q)$ acts on $\Gal(K/\Q(\zeta_p))$ via $\omega^{-1}$.  But, since we assume that $\chi$ is trivial, the previous paragraph shows that $\bar{c}(\tau)=0$ for all $\tau \in G_{F(\zeta_p)}$, so $K=F(\zeta_p)$. But $\Gal(\Q(\zeta_p)/\Q)$ acts on $\Gal(F(\zeta_p)/\Q(\zeta_p))$ via $\omega$,  so this implies $\omega=\omega^{-1}$ which is a contradiction because we assume $p>3$. 
\end{proof}

\begin{proposition}\label{L/F unramified}
The character $\chi$ is unramified everywhere.
\end{proposition}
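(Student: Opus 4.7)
The plan is to handle ramification at each rational prime separately. Since $\rho_T$ is unramified outside $Np$, $\chi$ is automatically unramified at primes of $F$ above any $\ell \notin \{N,p\}$, so only the cases $\ell=p$ and $\ell=N$ remain, and these will be treated by quite different methods.

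For a prime $\mathfrak{P}$ of $F$ above $p$, I will exploit ordinarity. Since $a_p(f) \equiv 1+p \equiv 1 \pmod{\varpi}$, the form $f$ is ordinary, so $\rho_T|G_{\Q_p}$ is conjugate to an upper-triangular representation whose diagonal characters restrict to $(\epsilon,1)$ on $I_p$. Hence $t_f|I_{F,\mathfrak{P}} \equiv \epsilon|I_{F,\mathfrak{P}} + 1 \pmod{\varpi^{s+1}}$, and comparing with \eqref{eq:t_f on G_F} restricted to $I_{F,\mathfrak{P}}$ together with $\chi \equiv 1 \pmod{\varpi^s}$ yields $\varpi^s \chi_0(\epsilon|I_{F,\mathfrak{P}}-1) \equiv 0 \pmod{\varpi^{s+1}}$, where $\chi_0 \in \F$ satisfies $\chi|I_{F,\mathfrak{P}} = 1+\varpi^s\chi_0$. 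Since $[F:\Q]=p$ is coprime to $p-1=|\F_p^\times|$, the image of $\omega|I_{F,\mathfrak{P}}$ is all of $\F_p^\times$, so $\omega-1$ takes a unit value on some element of $I_{F,\mathfrak{P}}$, forcing $\chi_0=0$.

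For a prime $\mathfrak{q}$ of $F$ above $N$ the argument is more delicate. Since $F/\Q$ is totally ramified at $N$ with ramification cut out by $\kappa_N$, we have $I_{F,\mathfrak{q}} = \ker(\kappa_N|I_N)$, and both $\epsilon$ and $\kappa_N$ vanish on $I_{F,\mathfrak{q}}$. By \cref{lem:rhoT shape}, $\rho_T|I_{F,\mathfrak{q}} \equiv I \pmod{\varpi^s}$, so I will write $\rho_T|I_{F,\mathfrak{q}} \equiv I+\varpi^s H \pmod{\varpi^{s+1}}$, where $H: I_{F,\mathfrak{q}} \to M_2(\F)$ is a group homomorphism (since $\varpi^{2s}\equiv 0 \pmod{\varpi^{s+1}}$). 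The condition $\det\rho_T=\epsilon$ forces $\tr(H)=0$, so $H$ takes values in $\Sl_2(\F)$. Since the target has $p$-power exponent and wild inertia at $N$ is pro-$N$, $H$ factors through the maximal abelian pro-$p$ quotient of $I_{F,\mathfrak{q}}$, which is $\cong \Z_p$ (corresponding to the pro-$p$ tame part of $I_N$ intersected with $\ker\kappa_N$).

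The crux is to constrain $H$ further using the conjugation action of $I_N/I_{F,\mathfrak{q}} \cong \Z/p$. I will pick a tame lift $\tau_0 \in I_N$ with $\kappa_N(\tau_0)=1$, possible since $\kappa_N$ factors through $I_N^{\mathrm{tame}}$, so that $\rho_T(\tau_0) \equiv u \pmod{\varpi}$ with $u = \sm{1}{1}{0}{1}$. For $\sigma\in I_{F,\mathfrak{q}}$, the identity $\rho_T(\tau_0\sigma\tau_0^{-1}) = \rho_T(\tau_0)\rho_T(\sigma)\rho_T(\tau_0)^{-1}$ becomes $H(\tau_0\sigma\tau_0^{-1}) = uH(\sigma)u^{-1}$. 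Because $\tau_0$ is tame and tame inertia is abelian, $\tau_0\sigma\tau_0^{-1}$ and $\sigma$ have the same image in the pro-$p$ abelianization of $I_{F,\mathfrak{q}}$, giving $H(\sigma) = uH(\sigma)u^{-1}$, so $H$ takes values in $\Sl_2(\F)^u$. A direct matrix computation (using $p\ne 2$) identifies these invariants as $\bigl\{\sm{0}{y}{0}{0} : y\in\F\bigr\}$, whose $(1,1)$-entry is zero. Since $\chi\epsilon\equiv a \pmod{\varpi^{s+1}}$ on $G_F$ (with $a$ the upper-left entry of $\rho_T$) and $\epsilon|I_{F,\mathfrak{q}}=1$, we conclude $\chi|I_{F,\mathfrak{q}} \equiv 1 \pmod{\varpi^{s+1}}$. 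I expect the main obstacle to be this last step: one must carefully justify the tame choice of $\tau_0$, the triviality of the induced conjugation action on the pro-$p$ abelianization of $I_{F,\mathfrak{q}}$, and the equivariance calculation for $H$.
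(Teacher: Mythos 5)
Your overall decomposition (handle $\ell \notin \{N,p\}$ trivially, then treat $p$ and $N$ separately) matches the paper, and your argument at $p$ is essentially the paper's: ordinarity forces $t_f|I_p = \epsilon + 1$, and comparing with \eqref{eq:t_f on G_F} yields $(\omega-1)\alpha = 0$ pointwise on $I_p\cap G_F$, from which one deduces $\alpha=0$. (Your phrasing ``$\omega-1$ takes a unit value on some element, forcing $\chi_0=0$'' elides the step that the relation is only pointwise; the paper fills this in with a short multiplicative trick using that $\alpha$ is a homomorphism, but this is a minor point.)

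The $N$-part, however, has genuine gaps, and the paper's own argument is dramatically simpler. The paper simply observes that the image of $I_{F,\mathfrak{q}}$ in $G_{F_\mathfrak{q}}^{\mathrm{ab}}$ is $\OK_{F_\mathfrak{q}}^\times$ by local class field theory, whose tame quotient is $\F_N^\times$ of order $N-1$ and whose wild part is pro-$N$; since $p \nmid N(N-1)$ (using $N\equiv -1$ and $p>2$), every $p$-power-order character of $G_F$ is unramified at $\mathfrak{q}$, full stop. Your argument, in contrast, runs into two problems. First, the claim $\rho_T|I_{F,\mathfrak{q}} \equiv I \bmod \varpi^s$ does not follow from \cref{lem:rhoT shape}: the Kummer cocycle $\kappa_N$ there takes values in $(\OK/\varpi^s)(1)$ and cuts out (on inertia) the ramification of $N^{1/p^s}$, not merely $N^{1/p}$; it vanishes mod $\varpi$ on $I_{F,\mathfrak{q}}$ but need not vanish mod $\varpi^s$ when $s>1$. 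Consequently the upper-right entry of $\rho_T|I_{F,\mathfrak{q}}$ lies only in $\varpi\OK$, not $\varpi^s\OK$, so your $H$ is not well-defined as a homomorphism $I_{F,\mathfrak{q}}\to M_2(\F)$. Second, and independently, $F_\mathfrak{q}/\Q_N$ is \emph{not Galois}: it is a degree-$p$ totally tamely ramified extension and $\zeta_p\notin\Q_N$ precisely because $N\not\equiv 1\bmod p$. Hence $I_{F,\mathfrak{q}}$ is not normal in $I_N$, so conjugation by your $\tau_0\in I_N\setminus I_{F,\mathfrak{q}}$ does not preserve $I_{F,\mathfrak{q}}$, and the relation $H(\tau_0\sigma\tau_0^{-1}) = uH(\sigma)u^{-1}$ is not even well-posed. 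You flagged this step as the one you were least sure of; the failure is more basic than the normalization issues you anticipated. I recommend replacing the entire $N$-argument with the one-line local class field theory observation.
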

\begin{proof}

Since $\rho$ is unramifed outside $Np$, $\chi$ is as well. It remains to show that $\chi$ is unramified at $N$ and $p$. We first consider ramification at $N$. By local class field theory, the maximal abelian tame quotient of the inertia group at $N$ in $G_F$ has order $N-1$, which is prime-to-$p$ by our assumptions that $N \equiv -1 \bmod{p}$ and $p>2$. Since the image of $\chi$ has order $p$, this implies that $\chi$ is unramified at $N$.

Finally, to see that $\chi$ is unramified at $p$ we need to show that $\chi|I_p \cap G_F = 0$.  By \cref{lem:rhoT shape} and the definition of $\chi$, we can write $\chi=1+\alpha \varpi^s$ for an additive character $\alpha: G_F \to \F$, and we need to show that $\alpha|I_p\cap G_F=0$.  In this notation, \eqref{eq:t_f on G_F} says
\[
t_f|G_F \equiv \varepsilon + 1 + (\varepsilon - 1)\alpha \varpi^s \bmod \varpi^{s+1}
\]
On the other hand, as we noted in the proof of \cref{lem:red ideal}, the fact that $f$ is ordinary implies that $t_f|I_p=\epsilon +1$.  Combining these two,  we have $(\varepsilon - 1)\alpha\varpi^s = 0$ on $I_p \cap G_F$.  This is equivalent to $(\omega - 1)\alpha = 0$ as functions $I_p \cap G_F \to \F$.  If $\sigma \in I_p \cap G_F \setminus \ker \omega$, then it follows that $\alpha(\sigma) = 0$.  For $\sigma \in I_p \cap G_F \cap \ker \omega$, choose any $\tau \in I_p \cap G_F \setminus \ker \omega$.  Then $\omega(\tau) \neq 1$ and $\alpha(\tau) = 0$, so we obtain
\[
0 = (\omega(\sigma\tau)-1)\alpha(\sigma\tau) = (\omega(\sigma)\omega(\tau)-1)(\alpha(\sigma)+\alpha(\tau)) =( \omega(\tau)-1)\alpha(\sigma),
\]
and thus $\alpha(\sigma)=0$.
\end{proof}

\subsection{Explicit class field theory encoded by $f$}
\label{example 5 19}
Keep the notation from the previous section.  In particular, $N, p, f$ are all fixed.  From $f$, there is an associated character $\chi$ of $G_{\Q(N^{1/p})}$ as in \cref{main thm 2}\ref{character}, or equivalently as defined prior to \cref{L/F unramified}.  Let $L$ be the degree $p$ extension of $\Q(N^{1/p})$ cut out by character $\chi$, so $L/\Q(N^{1/p})$ is an everywhere unramified degree $p$ extension.  In this section we show how the Fourier coefficients of $f$ (modulo $\varpi^{s+1}$) carry information about how primes of $\Q(N^{1/p})$ split in $L$ as well as when $N$ is not a $p$-th power modulo $\ell$ when $\ell \equiv 1 \bmod p$.  This explicit information shows the advantage of our modular methods compared to Calegari's abstract cup product argument discussed at the end of \cref{contrasts}.

We begin by understanding how rational primes split in $\Q(N^{1/p})$.  For each prime $\ell$, fix a discrete logarithm $\log_\ell : \F_\ell^\times \to \Z/(\ell - 1)\Z$, so $a \in \F_\ell^\times$ is a $p$-th power if and only if $\log_\ell(a) \equiv 0 \bmod p$, which is automatic whenever $\ell \not\equiv 1 \bmod p$.

\begin{lemma}\label{splitting in F}
Let $\ell \neq p, N$ be prime and let $r$ be the multiplicative order of $\ell$ in $\F_p^\times$. If $\log_\ell(N) \not\equiv 0 \bmod p$, then $\ell$ is inert in $\Q(N^{1/p})$.  Otherwise,  there are $\frac{p-1}{r}+1$ primes of $\Q(N^{1/p})$ lying over $\ell$, one with residue degree $1$ and the rest having residue degree $r$.
\end{lemma}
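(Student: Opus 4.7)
The plan is to apply the Kummer--Dedekind theorem to reduce the splitting of $\ell$ in $\Q(N^{1/p})$ to the factorization of $x^p - N$ in $\F_\ell[x]$, and then perform that factorization explicitly. Since $\mathrm{disc}(x^p - N) = \pm p^p N^{p-1}$ is coprime to $\ell$ under the hypothesis $\ell \ne p, N$, the index $[\OK_{\Q(N^{1/p})} : \Z[N^{1/p}]]$ is coprime to $\ell$, so $\ell$ is unramified in $\Q(N^{1/p})$ and its prime decomposition matches the irreducible factorization of $x^p - N$ modulo $\ell$.

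To analyze this factorization, fix a root $\alpha \in \overline{\F_\ell}$ of $x^p - N$; the root set is $\{\zeta_p^i\alpha : 0 \le i < p\}$, and the $\ell$-power Frobenius $\varphi$ acts by $\zeta_p^i\alpha \mapsto \zeta_p^{i\ell}\alpha^\ell$. The irreducible factors of $x^p - N$ over $\F_\ell$ correspond to $\langle\varphi\rangle$-orbits on the roots, with degrees equal to the orbit sizes. If $\log_\ell(N) \not\equiv 0 \bmod p$, then $\ell \equiv 1 \bmod p$ is forced (since otherwise the $p$-th power map is a bijection on $\F_\ell^\times$), so $r = 1$; I would then invoke the standard criterion that $x^p - a$ is irreducible over any field $F$ whenever $a \notin F^p$, yielding that $\ell$ is inert. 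Otherwise $N$ is a $p$-th power modulo $\ell$, so one may take $\alpha \in \F_\ell$: then $\varphi$ fixes $\alpha$ (contributing a linear factor), while on $\{\zeta_p^i\alpha : 1 \le i \le p-1\}$ it acts via $i \mapsto \zeta_p^i\alpha$ as multiplication by $\ell$ on $(\Z/p\Z)^\times$, whose orbits all have size $r$, giving $(p-1)/r$ further irreducible factors of degree $r$. Summing produces the claimed $1 + (p-1)/r$ primes with the stated residue degrees.

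The only subtle ingredient is the irreducibility criterion used in the inert case, since $\F_\ell$ need not contain $\zeta_p$. It is handled by a short norm-and-$\gcd$ argument: if $\alpha$ had degree $d < p$ over $\F_\ell$, then the constant term of its minimal polynomial (up to a $p$-th root of unity) equals $\alpha^d$, forcing $N^d = \alpha^{dp} \in \F_\ell^p$, after which $\gcd(d, p) = 1$ together with a B\'ezout identity $1 = ud + vp$ yields $N \in \F_\ell^p$, contradicting the assumption. No deeper obstacle arises.
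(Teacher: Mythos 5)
Your proof is correct and takes essentially the same route as the paper: reduce via Kummer--Dedekind to the factorization of $x^p - N$ over $\F_\ell$ (both of you observe the relevant discriminant is coprime to $\ell$), handle the non-$p$-th-power case by the standard irreducibility criterion for $x^p - a$, and in the $p$-th-power case pull out the linear factor and count the remaining factors of degree $r$. The only cosmetic differences are that the paper cites Lang for the irreducibility criterion where you supply the norm-and-B\'ezout argument inline, and the paper phrases the second case as $\F_\ell[y]/(y^p-1) \cong \F_\ell \times \F_\ell[y]/(\Phi_p(y))$ with $\Phi_p$ splitting into degree-$r$ irreducibles, whereas you phrase it as Frobenius orbits of size $r$ on the nontrivial roots --- the same computation, dualized. (Your remark that the first case forces $\ell \equiv 1 \bmod p$ and $r = 1$ is correct but is not actually used in that branch of the argument.)
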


\begin{proof}
Note that the only prime factors dividing the discriminant of the order $\Z[N^{1/p}]$ are $p$ and $N$ --- the same prime divisors of the discriminant of $\Q(N^{1/p})$.  Thus we can understand the splitting behavior of $\ell$ in $\Q(N^{1/p})$ by considering how $x^p-N$ factors over $\F_\ell$.

First suppose $\log_\ell(N) \not\equiv 0 \bmod p$, so $N$ is not a $p$-th power in $\F_\ell$. Then it is well-known that $x^p-N$ is irreducible over $\F_\ell$ (see \cite[Theorem VI.9.1, pg.~297]{Lang2002}, for example). 

Now suppose $\log_\ell(N) \equiv 0 \bmod p$, so $N = a^p$ for some $a \in \F_\ell^\times$.  Using the substitution $x \mapsto ay$, we get
\[
\frac{\F_\ell[x]}{(x^p-N)} \cong \frac{\F_\ell[y]}{(y^p - 1)} \cong \F_\ell \times \frac{\F_\ell[y]}{(\Phi_p(y))} \cong \F_\ell \times \F_{\ell^r}^{(p-1)/r},
\]
where $\Phi_p(y)$ is the cyclotomic polynomial. For the last isomorphism, note that $\Phi_p(y)$ divides $y^{\ell^r}-y$ but $\gcd(\Phi_p(y),y^{\ell^d}-y)=1$ for any $d<r$,  so the irreducible factors of $\Phi_p(y)$ all have degree $r$. 
\end{proof}

\begin{proposition}\label{explicit what we know}
Let $\ell \neq N, p$ be prime.
\begin{enumerate}
\item If $\ell \equiv 1 \bmod p$ and $a_\ell(f) \not\equiv 2 \bmod \varpi^{s+1}$, then $\log_\ell(N) \not\equiv 0 \bmod p$, so $\ell$ is inert in $\Q(N^{1/p})$ and $\ell\OK_{\Q(N^{1/p})}$ splits completely in $L$ (in fact, in the Hilbert class field of $\Q(N^{1/p})$).
\item If $\ell \not\equiv 1 \bmod p$, then the unique prime of $\Q(N^{1/p})$ lying over $\ell$ of residue degree $1$ splits in $L$ if and only if $a_\ell(f) \equiv \ell+1 \bmod \varpi^{s+1}$.
\end{enumerate}
\end{proposition}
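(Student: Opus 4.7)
My plan is to evaluate the trace formula $t_f|G_F \equiv \chi\varepsilon + \chi^{-1} \bmod \varpi^{s+1}$ from \cref{finding chi} at Frobenius elements, combined with \cref{splitting in F} to locate the relevant primes of $F$. Since $\chi \equiv 1 \bmod \varpi^s$, we can write $\chi = 1 + \alpha \varpi^s$ for an additive character $\alpha : G_F \to \F$ (as already done in the proof of \cref{L/F unramified}), and direct expansion yields the basic identity
\[
t_f(\sigma) \equiv \varepsilon(\sigma) + 1 + \alpha(\sigma)\bigl(\varepsilon(\sigma) - 1\bigr)\varpi^s \bmod \varpi^{s+1}
\]
for $\sigma \in G_F$. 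The upshot is that when $\varepsilon(\sigma) - 1$ is a unit of $\OK$, the value of $\chi(\sigma)$ is pinned down by $t_f(\sigma) \bmod \varpi^{s+1}$, while when $\varepsilon(\sigma) - 1 \in \varpi\OK$ the correction term is absorbed and we get $t_f(\sigma) \equiv \varepsilon(\sigma) + 1$ automatically.

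For Part (2), since $\ell \not\equiv 1 \bmod p$ the integer $\ell - 1$ is a $p$-adic unit. \cref{splitting in F} provides a unique degree-$1$ prime $\lambda$ of $F$ above $\ell$; under compatible choices of embeddings, its Frobenius in $G_F$ is identified with a Frobenius $\Frob_\ell \in G_\Q$, giving $t_f(\Frob_\lambda) = a_\ell(f)$ and $\varepsilon(\Frob_\lambda) = \ell$. Since $L/F$ is cut out by $\chi$, the prime $\lambda$ splits in $L$ iff $\chi(\Frob_\lambda) = 1$, equivalently $\alpha(\Frob_\lambda) = 0$. The identity above, with $\ell - 1 \in \OK^\times$, then shows this happens iff $a_\ell(f) \equiv \ell + 1 \bmod \varpi^{s+1}$, completing Part (2).

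For Part (1), I prove the contrapositive: assuming $\ell \equiv 1 \bmod p$ and $\log_\ell(N) \equiv 0 \bmod p$, I show $a_\ell(f) \equiv 2 \bmod \varpi^{s+1}$. Under these hypotheses \cref{splitting in F} guarantees a degree-$1$ prime $\lambda | \ell$ in $F$, so $\Frob_\ell \in G_F$ for a compatible choice, and the identity applies with $\varepsilon(\Frob_\ell) = \ell$. Because $\ell \equiv 1 \bmod p$ forces $\ell - 1 \in p\Z \subset \varpi\OK$, the correction term $\alpha(\Frob_\ell)(\ell-1)\varpi^s$ lies in $\varpi^{s+1}\OK$ regardless of $\alpha(\Frob_\ell)$, so $a_\ell(f) \equiv \ell + 1 \equiv 2 \bmod \varpi^{s+1}$.

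The final claim about the Hilbert class field comes for free: if $\log_\ell(N) \not\equiv 0 \bmod p$ then $\ell$ is inert in $F$ by \cref{splitting in F}, so $\ell\OK_F$ is the principal prime ideal generated by the rational integer $\ell$, hence trivial in $\Cl(F)$ and completely split in the Hilbert class field of $F$. The main technical delicacy is simply the careful identification of Frobenii at degree-$1$ primes of $F$ with Frobenii in $G_\Q$, which requires a consistent choice of embeddings $\overline{\Q} \hookrightarrow \overline{\Q}_\ell$ matched with the fixed $\overline{\Q} \hookrightarrow \overline{\Q}_p$; once this is set up, everything reduces to the direct substitution above.
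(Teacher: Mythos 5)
Your proposal is correct and mirrors the paper's proof essentially step for step: write $\chi = 1 + \varpi^s\alpha$, derive $t_f(\sigma) \equiv \varepsilon(\sigma)+1+\varpi^s(\varepsilon(\sigma)-1)\alpha(\sigma) \bmod \varpi^{s+1}$, evaluate at $\Frob_\lambda$ for the degree-$1$ prime, and split into the two cases according to whether $\ell - 1$ is a $p$-adic unit, with the Hilbert class field claim following from principality of $\ell\OK_F$. The remark about needing to match embeddings at $\ell$ and $p$ is unnecessary caution (everything in sight is conjugation-invariant, so the conjugacy class of $\Frob_\lambda$ suffices), but it does no harm.
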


\begin{proof}
Write the character $\chi$ from \cref{main thm 2}\ref{character} as $\chi = 1 + \varpi^s\alpha$ with $\alpha : G_{\Q(N^{1/p})} \to \F$ an additive character.  Then $t_f|G_{\Q(N^{1/p})} = \omega\chi + \chi^{-1} = 1 + \omega + \varpi^s(\omega-1)\alpha$.  Suppose that $\log_\ell(N) \equiv 0 \bmod p$ so that $\ell$ has a prime $\lambda$ of $\Q(N^{1/p})$ lying above it of residue degree $1$.  Up to conjugation we may take $\Frob_\ell = \Frob_\lambda \in G_{\Q(N^{1/p})}$.  Thus by \eqref{eq:t_f on G_F}, we have
\begin{equation}\label{mod s+1}
a_\ell(f) = t_f(\Frob_\lambda) \equiv 1 + \ell + \varpi^s(\ell -1)\alpha(\Frob_\lambda) \bmod \varpi^{s+1}
\end{equation}
whenever $\log_\ell(N) \equiv 0 \bmod{p}$. 

When $\ell \not\equiv 1\bmod p$, we see that $a_\ell(f) \equiv \ell + 1 \bmod \varpi^{s+1}$ if and only if $\alpha(\Frob_\lambda) = 0$.  Since $L$ is cut out by $\ker \chi = \ker \alpha$, it follows that $\lambda$ splits in $L$ if and only if $a_\ell(f) \equiv \ell+1 \bmod \varpi^{s+1}$, proving (2).  

In contrast, when $\ell \equiv 1 \bmod p$, \eqref{mod s+1} shows that $a_\ell(f) \equiv 2 \bmod \varpi^{s+1}$ under the assumption $\log_\ell(N) \equiv 0 \bmod p$, thus establishing the contrapositive of (1).  The last part of (1) follows from \cref{splitting in F} and the fact that the principal ideals of $\Q(N^{1/p})$ are exactly those that split completely in its Hilbert class, which contains $L$.  
\end{proof}

\begin{remark}
Note that if a prime $\ell$ satisfies $a_\ell(f) \equiv \ell + 1 \bmod \varpi^{s+1}$, then $T_\ell$ cannot generate the Eisenstein ideal since that would force the entire Eisenstein congruence to persist modulo $\varpi^{s+1}$, contradicting the definition of $s$. 
\end{remark}

If we impose the hypothesis that the class number of $\Q(N^{1/p})$ is $p$, so $L$ is its Hilbert class field, then we can further interpret our results in a classical style suggestive of results in explicit class field theory in the case of imaginary quadratic fields.  While this hypothesis on the class number is certainly not always satisfied, it holds in many examples.  For instance, the hypothesis holds when $p = 5$ and $N \in \{19, 29, 59, 79, 89, 109, 139, 149, 199\}$ and when $p = 7$ and $N \in \{13, 41, 97, 139, 181\}$.

\begin{corollary}\label{add class number assumption}
Assume that $\Q(N^{1/p})$ has class number $p$.  Let $\lambda$ be a prime of $\Q(N^{1/p})$ lying over $\ell \neq N, p$.
\begin{enumerate}
\item\label{equivalent conditions} Suppose that $\ell \not \equiv 1 \bmod p$ and either $\#\OK_{\Q(N^{1/p})}/\lambda = \ell$ or $\#\OK_{\Q(N^{1/p})}/\lambda = \ell^{p-1}$.  Then the following are equivalent:
\begin{enumerate}
\item $\lambda$ is a principal $\OK_{\Q(N^{1/p})}$-ideal;
\item $\lambda$ splits completely in $L$ over $\Q(N^{1/p})$;
\item $\ell$ is a norm from $\Q(N^{1/p})$;
\item $a_\ell(f) \equiv \ell + 1 \bmod \varpi^{s+1}$.
\end{enumerate}
\item If $\ell \equiv 1 \bmod p$ and $a_\ell(f) \not\equiv 2 \bmod \varpi^{s+1}$, then $\ell$ is inert in $\Q(N^{1/p})$ and then splits in $L$.  In this case $\ell$ is not a norm from $\Q(N^{1/p})$.
\end{enumerate}
\end{corollary}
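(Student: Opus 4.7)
The plan is to combine the Hilbert-class-field interpretation of $L$, which holds under the class-number-$p$ hypothesis, with \cref{explicit what we know}, which already contains essentially all of the modular content; the present statement is a repackaging of that material under a convenient additional hypothesis.

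For part (1), the first step is to observe that the hypothesis on $\#\OK_{\Q(N^{1/p})}/\lambda$ combined with \cref{splitting in F} rules out the inert case: if $\log_\ell(N) \not\equiv 0 \bmod p$ then $\ell$ is inert with residue degree $p$, contradicting the assumption that the residue degree is $1$ or $p-1$. Hence $\log_\ell(N) \equiv 0 \bmod p$, and \cref{splitting in F} gives a factorization $(\ell) = \lambda_1 \prod_i \lambda_i'$ in which $\lambda_1$ is the unique degree-$1$ prime and each $\lambda_i'$ has residue degree $r$, the multiplicative order of $\ell$ in $\F_p^\times$; the case $\#\OK_{\Q(N^{1/p})}/\lambda = \ell^{p-1}$ further forces $r = p-1$, so that the factorization collapses to $(\ell) = \lambda_1 \lambda_2$.

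Given this, I would verify the four equivalences in turn. The implication (a)$\Leftrightarrow$(b) is the defining property of the Hilbert class field; (a)$\Leftrightarrow$(c) holds because an element of norm $\pm\ell$ produces an integral ideal of norm $(\ell)$, which must be a degree-$1$ prime above $\ell$ and hence equals $\lambda_1$; and (b)$\Leftrightarrow$(d) is \cref{explicit what we know}(2) applied to $\lambda_1$. The case $\lambda = \lambda_2$ of degree $p-1$ is handled uniformly by the class-group relation $[\lambda_1] + [\lambda_2] = 0$ in $\Cl(\Q(N^{1/p})) \cong \Z/p\Z$, which forces $\lambda_1$ and $\lambda_2$ to share their principality (and hence splitting in $L$) status; so whichever $\lambda$ the hypothesis picks out, its principality is equivalent to that of $\lambda_1$, where the preceding arguments apply.

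For part (2), \cref{explicit what we know}(1) already gives that $\ell$ is inert in $\Q(N^{1/p})$ and that $\ell\OK_{\Q(N^{1/p})}$ splits completely in the Hilbert class field, which under our hypothesis is $L$. The only new assertion is that $\ell$ is not a norm from $\Q(N^{1/p})$: any $\alpha$ with $N_{\Q(N^{1/p})/\Q}(\alpha) = \pm\ell$ would produce an integral ideal $(\alpha)$ of norm $(\ell)$, but every integral ideal supported above $\ell$ is a power of the unique inert prime and hence has norm of the form $(\ell^p)^k$, which cannot equal $(\ell)$. I do not expect any step to present a serious obstacle; the main bookkeeping point is unifying the degree-$1$ and degree-$(p-1)$ cases of (1) via the class-group relation $[\lambda_1] = -[\lambda_2]$, rather than handling them by separate arguments.
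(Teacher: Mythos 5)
Your proof is correct and follows essentially the same route as the paper: reduce to \cref{explicit what we know} and \cref{splitting in F}, then use the fact that under the class-number-$p$ hypothesis $L$ is the Hilbert class field of $F$. Two small remarks on where you are actually more careful than the paper's own proof. First, the paper's proof only explicitly treats the case $\#\OK_F/\lambda = \ell$; the reduction of the degree-$(p-1)$ case to the degree-$1$ case via the class-group relation $[\lambda_1][\lambda_2] = [(\ell)] = 1$ (which you correctly observe needs nothing beyond the fact that $(\ell)$ is principal, not the full hypothesis $\Cl(F)\cong\Z/p\Z$) is left implicit there, and your handling of it closes a genuine gap in the write-up. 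Second, the paper's proof that $\ell$ is not a norm in part (2) is packed into one sentence; your observation that every integral ideal supported above an inert $\ell$ has norm a power of $\ell^p$ is the intended argument. One minor point worth a word in a final write-up: in the equivalence (a)$\Leftrightarrow$(c) you should note that since $[F:\Q]=p$ is odd, $N(-\alpha) = -N(\alpha)$, so the sign in ``norm $\pm\ell$'' can always be adjusted to $+\ell$.
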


\begin{proof}
Write $F \coloneqq \Q(N^{1/p})$, and suppose that $\ell \not \equiv 1 \bmod p$ and $\#\OK_F/\lambda = \ell$.  The equivalence of (a) and (b) follows from the fact that the primes that split in the Hilbert class field $L$ of $F$ are exactly the principal ideals.  The equivalence of (a) and (c) follows from the fact that the norm of an element is equal to the norm of the ideal it generates.  The equivalence of (b) and (d) follows from the first part of \cref{explicit what we know}.

The second part follows from \cref{explicit what we know} and the fact that $L$ is the Hilbert class field of $F$ by assumption.
\end{proof}

\begin{example}
We finish with an example when $p=5$ and $N = 19$.  We compute that $\Q(19^{1/5})$ has class number $5$, so \cref{add class number assumption} applies. In this case $f$ has LMFDB label 361.2.a.f and Hecke field $\Q(\sqrt{5})$.  The Eisenstein congruence holds modulo $\varpi = \sqrt{5}$, but not modulo $\varpi^2 = 5$, so $s = 1$ in this case.  Set $\beta = \frac{1 + \sqrt{5}}{2}$.  \cref{table} contains the first sixty prime-index coefficients for $f$.  The ones in bold are those for which the Eisenstein congruence persists modulo $5$, and the circled primes $\ell$ are those for which \cref{add class number assumption} implies that there exists a principal prime ideal of $F$ lying over $\ell$.  (We also circle $19$ since the principal ideal generated by $19^{1/5}$ clearly lies over it.)  Moreover, in this example we can calculate that $\OK_F = \Z[19^{1/5}]$ and hence it is easy to write the norm form explicitly.  In particular, the four equivalent conditions on $\ell$ in \cref{add class number assumption}\eqref{equivalent conditions} are also equivalent to 
\begin{align*}
\ell  = a^{5} &	-95 a^{3} b e -95 a^{3} c d + 95 a^{2} b^{2} d + 95 a^{2} b c^{2} + 1805 a^{2} c e^{2} + 1805 a^{2} d^{2} e -95 a b^{3} c + 1805 a b^{2} e^{2}  \\
& -1805 a b c d e -1805 a b d^{3} -1805 a c^{3} e + 1805 a c^{2} d^{2} -34295 a d e^{3} + 19 b^{5} -1805 b^{3} d e \\
&  + 1805 b^{2} c^{2} e + 1805 b^{2} c d^{2} -1805 b c^{3} d -34295 b c e^{3} + 34295 b d^{2} e^{2} + 361 c^{5} + 34295 c^{2} d e^{2} \\
& -34295 c d^{3} e + 6859 d^{5} + 130321 e^{5}
\end{align*}
for some $(a,b,c,d,e) \in \Z^5$.

\begin{table}
\centering
\begin{tabular}{| c | c || c | c || c | c || c | c | }
\hline
$\ell$ & $a_\ell(f)$ & $\ell$ & $a_\ell(f)$ & $\ell$ & $a_\ell(f)$ & $\ell$ & $a_\ell(f)$\\
\hline\hline
2 & $\beta$ & 53 & $5-7\beta$ & 127 & $9-2\beta$ & 199 & $6-12\beta$\\
3 & $2-\beta$ & 59 & $-11 + 7\beta$ & 131 & \boldsymbol{$7+5\beta$} & \Circle{211} & $1-3\beta$\\
5 & $2\beta$ & \Circle{61} & $-7-2\beta$ & 137 & $1 + 4\beta$ & 223 & $11-14\beta$ \\
\Circle{7} & \boldsymbol{$3$} & \Circle{67} & \boldsymbol{$-7$} & 139 & $-3+11\beta$ & 227 & $-3+12\beta$\\
\Circle{11} & $-\beta$ & \Circle{71} & $-1-4\beta$ & \Circle{149} & \boldsymbol{$-10+5\beta$} & 229 & $-12-\beta$\\
\Circle{13} & \boldsymbol{$-1$} & 73 & $7-6\beta$ & 151 & \boldsymbol{$-13 - 5\beta$} & 233 & $11-4\beta$\\
17 & $4 -2 \beta$ & 79 & $-6+12\beta$ & 157 & $-13-3\beta$ & 239 & $11-7\beta$\\
\Circle{19} & \boldsymbol{$0$} & 83 & $2+4\beta$ & 163 & $5-2\beta$ & 241 & \boldsymbol{$-13+10\beta$}\\
23 & $7 - \beta$ & 89 & $-11+2\beta$ & 167 & $17+2\beta$ & 251 & \boldsymbol{$7-20\beta$}\\
29 & $-2 - \beta$ & 97 & $9+3\beta$ & 173 & $6-4\beta$ & \Circle{257} & \boldsymbol{$-12 + 20\beta$}\\
\Circle{31} & $-4-3\beta$ & 101 & \boldsymbol{$7-10\beta$} & 179 & $9+2\beta$ & 263 & $-2 - 8\beta$\\
37 & $4 + 3\beta$ & 103 & $3+7\beta$ & 181 & \boldsymbol{$12$} & 269 & $19+7\beta$\\
41 & \boldsymbol{$-3$} & 107 & $-3+12\beta$ & \Circle{191} & $11 + 2\beta$ & \Circle{271} & $6-3\beta$\\
43 & $5+3\beta$ & 109 & $-1+6\beta$ & 193 & $18-8\beta$ & 277 & $-8 + 12\beta$\\
\Circle{47} & \boldsymbol{$3$} & 113 & $10-2\beta$ & \Circle{197} & \boldsymbol{$3$} & \Circle{281} & $-2-17\beta$\\
\hline
\end{tabular}
\caption{Prime-index coefficients of 361.2.a.f, with $\beta = \frac{1+ \sqrt{5}}{2}$}
\label{table}
\end{table}

\end{example}

\textbf{Acknowledgements.} We thank Frank Calegari for asking the question that inspired this work and for his encouragement, and we thank Pedro Lemos for helpful conversations related to this project.  The second author acknowledges support from NSF grant DMS-1901867.

\bibliography{eisenstein_congruences}
\bibliographystyle{alpha}
\end{document}